\documentclass[a4paper,11pt,centertags]{amsart}
\usepackage[latin1]{inputenc}
\usepackage[english]{babel}
\usepackage{cite}
\usepackage{verbatim}
\usepackage{color}
\usepackage{hyperref}
\setcounter{page}{1}

\pagestyle{myheadings}
\usepackage{palatino,mathpazo}
\usepackage{amsmath,amsthm}
\usepackage{comment}




\usepackage[top=3.2cm, bottom=3.2cm, left=2.8cm,right=2.8cm]{geometry}

\newcommand{\diesis}{^\#}
\newtheorem{theo}{Theorem}[section]
\newtheorem{lemma}{Lemma}[section]

\theoremstyle{definition}
\newtheorem{definiz}{Definition}[section]
\newtheorem{rem}{Remark}[section]

\numberwithin{equation}{section}

\newcommand{\R}{\mathbb R}

\newcommand{\de}{\partial}
\newcommand{\eps}{\varepsilon}

\DeclareMathOperator{\Sk}{S}


\begin{document}
\title[Stability results for some fully nonlinear eigenvalue
estimates]{Stability results for 
some fully nonlinear eigenvalue estimates} 
\author[F. Della Pietra, N. Gavitone]{
  Francesco Della Pietra and Nunzia Gavitone
}
\address{Francesco Della Pietra \\
Universit\`a degli studi del Molise \\
Dipartimento di Bioscienze e Territorio -
Divisione di Fisica, Informatica e Matematica\\
Via Duca degli Abruzzi \\
86039 Termoli (CB), Italia.
}
\email{francesco.dellapietra@unimol.it}

 \address{
Nunzia Gavitone \\
Universit\`a degli studi di Napoli ``Federico II''\\
Dipartimento di Ma\-te\-ma\-ti\-ca e Applicazioni ``R. Caccioppoli''\\
80126 Napoli, Italia.
}
\email{nunzia.gavitone@unina.it}
\keywords{Eigenvalue problems, Hessian operators, stability estimates}
\subjclass[2000]{
  35P15, 35P30
}
\date{\today}
\maketitle
\begin{abstract}
In this paper, we give some stability estimates for the Faber-Krahn
inequality relative to the eigenvalue $\lambda_k(\Omega)$ of the
Hessian operator $\Sk_k$, $1\le k \le n$, in a reasonable bounded
domain $\Omega$. Roughly speaking, we prove that if
$\lambda_k(\Omega)$  is near to $\lambda_k(B)$, where $B$ is a
ball which preserves an appropriate measure of $\Omega$, then, in a
suitable sense, $\Omega$ is close to $B$.
\end{abstract}

\section{Introduction}
In this paper we prove some stability estimates for the eigenvalue
$\lambda_k(\Omega)$ of the $k$-Hessian operator, that has the
variational characterization
\begin{equation*}
\lambda_k(\Omega) = \min\left\{\int_\Omega (-u)
    S_k(D^2u)\,dx,\; u\in \Phi_k^2(\Omega) \text{ and }
    \int_\Omega (-u)^{k+1}\,dx=1 \right\}.
  \end{equation*}
Here $\Omega$ is a bounded, strictly convex, open set of $\R^n$, $n\ge
2$, with $C^2$ boundary, $\Sk_k(D^2u)$, with $1\le k\le n$,  is the
$k$-th elementary symmetric function of the eigenvalues of $D^2 u$
with $u\in C^2(\Omega)$, and $\Phi_k^2(\Omega)$ denotes the class of
the admissible functions for $\Sk_k$, the so-called $k$-convex
functions (see Section 2 for the precise definitions). Notice that
$S_1(D^2u)=\Delta u$, the Laplacian operator, while $\Sk_n(D^2u)=\det
D^2 u$, the Monge-Amp\`ere operator. 

It is known that, under suitable assumptions on
$\Omega$, for this kind of operators a Faber-Krahn inequality holds,
that is the eigenvalue $\lambda_k(\Omega)$ attains its minimum value
on the ball $\Omega_{k-1}^*$, which preserves an appropriate curvature
measure of $\Omega$, the $(k-1)$-th quermassintegral:
\begin{equation}
 \label{introfksk}
\lambda_k(\Omega) \ge \lambda_k(\Omega^*_{k-1}), \quad 1\le k\le n
\end{equation}
(see \cite{bt07,ga09}). Moreover, for $k=n$,
$\lambda_n(\Omega)$ is also bounded from above by 
$\lambda_n(\Omega^*_0)$, with $|\Omega^*_0|=|\Omega|$ (see
\cite{bntpoin}). For sake of completeness, we
recall that in the case of Neumann boundary condition, for $k=1$, the
reverse inequality in \eqref{introfksk} holds (see \cite{w56,sz54},
\cite{chdb12} and \cite{brchtr09,dpg2} for related results).

In \cite{dpg5} we give some stability estimates of
\eqref{introfksk}, proving that
\begin{equation*}
    \frac{\lambda_k(\Omega) -
      \lambda_k(\Omega^*_{k-1})}{\lambda_k(\Omega)} \le C_{n,k}
    \frac{|\Omega^*_k|-|\Omega|}{|\Omega^*_k|}, \quad 1\le k\le n-1,
  \end{equation*}
for some constant $C_{n,k}$ depending only on $n$ and $k$, while for
$k=n$,
\begin{equation*}
  \frac{\lambda_n(\Omega) -
    \lambda_n(\Omega^*_{n-1})}{\lambda_n(\Omega)} \le C_n
  \frac{|\Omega^*_{n-1}|-|\Omega|}{|\Omega^*_{n-1}|},
\end{equation*}
where $C_n$ which depends only on $n$. Roughly speaking, such
inequalities state that if $\Omega$ is close to a ball with respect
the $L^1$ norm, then their corresponding eigenvalues are near. Such
result is in the spirit of a well-known result due to Payne and
Weinberger for the Laplace operator (see \cite{pw61}), and given in
\cite{bnt10} for the $p$-Laplace operator (see also \cite{nitsch12}
for the best constant in the case $p=2$, and \cite{dpgtors} for
the anisotropic case).

Viceversa, the aim of this paper is to prove some stability results
which ensure  that if $\lambda_k(\Omega)$ is near to
$\lambda_k(\Omega^*_{k-1})$, then, in an appropriate sense, $\Omega$ is
close to a suitable ball of $\R^n$ (see Section 2 for the precise
statements). 

There are several contributions in this direction, for the first
eigenvalue of the Laplacian operator (see \cite{melas92},
\cite{hana94}) or, more generally, for the $p$-Laplacian
(see \cite{bhatta01}, \cite{fmp09}). In such papers, depending on the
assumptions on $\Omega$, suitable notions of the distance between the
set $\Omega$ and a ball are considered. In particular, under the
convexity assumption on the domain, it seems natural to take into
account the Hausdorff distance (see \cite{melas92}), while, in a more
general setting, such notion is replaced by the so-called Fraenkel
asymmetry (see \cite{bhatta01}, \cite{fmp09}). Both arguments are
considered in \cite{hana94}. 

Dealing with convex sets, our aim is to prove some stability result
for Hessian operators in the spirit of the results given in
\cite{melas92,hana94}. 
In particular, we prove that for a strictly convex, smooth domain
$\Omega$, such that
\[
\lambda_k(\Omega) \le \lambda_k(\Omega^*_{k-1})(1+\eps),
\]
for some $\eps>0$ sufficiently small, then there exist two balls
$B_{r_\Omega}$ and $B_{R_\Omega}$ such that $B_{r_\Omega}\subseteq
\Omega \subseteq B_{R_\Omega}$ and two suitable asymmetry coefficients
of $\Omega$ with respect to $\Omega^*_{k-1}$ vanish when $\eps$ goes
to zero. This will imply that the Hausdorff asymmetry of $\Omega$ is
close to zero (see Section 2.4 for the precise definitions). 

The paper is organized as follows. In Section 2, we recall some basic
definitions of convex geometry, and the properties of symmetrization
for quermassintegrals. Moreover, we summarize some useful results on
the eigenvalue problem for Hessian operators. In sections 3 and 4 we
state and prove the main results. We distinguish the case of the
Monge-Amp\`ere operator (see Section 3) from the case of $\Sk_k$,
$1\le k\le n-1$. Our approach makes use of a quantitative version of a
suitable isoperimetric inequality and a symmetrization for
quermassintegral technique.

\section{Notation and preliminaries}
Throughout the paper, we will denote with $\Omega$ a set of
$\R^n$, $n\ge 2$ such that
\begin{equation}
  \label{ipomega}
    \Omega\text{ is a bounded, strictly convex, open set with }C^2
    \text{ boundary}.
  \end{equation}
  By strict convexity of $\Omega$ we mean that the Gauss curvature is
  strictly positive at every point of $\de \Omega$.
  
Given a function $u \in C^2(\Omega)$, we denote by $\lambda(D^2u)=
(\lambda_1,\lambda_2, \ldots,\lambda_n)$ the vector of the 
eigenvalues of $D^2u$.  The $k$-Hessian operator $\Sk_k(D^2u)$, with
$k=1,2,\ldots,n$, is
\begin{equation*}
  \Sk_k(D^2u)=\sum_{i_1<i_2<\cdots<i_k} \lambda_{i_1} \cdot
  \lambda_{i_2} \cdots \lambda_{i_k}.
\end{equation*}
Hence $\Sk_k(D^2u)$ is the sum of all $k \times k$ principal minors of
the matrix $D^2u$.

The $k$-Hessian operator can be written also in divergence form,
that is
\begin{equation*}
  \label{div}
  \Sk_k(D^2u)=\frac{1}{k}\sum_{i,j=1}^n (\Sk_k^{ij}u_i)_j,
\end{equation*}
where $S_k^{ij} = \frac{\partial S_k(D^2u)}{\partial u_{ij}}$ (see
for instance \cite{trudi1}, \cite{trudi2}, \cite{wangeigen}).

Well known examples ok $k$-Hessian operators are $\Sk_1(D^2u)=\Delta
u$, the Laplace operator, and $\Sk_n(D^2u)=\det(D^2u)$, the
Monge-Amp\`ere operator.

It is well-known that $\Sk_1(D^2u)$ is elliptic. This property is not
true in general for $k>1$. As matter of fact, the $k$-Hessian operator
is elliptic when it acts on the class of the so-called
$k$-convex function, defined below.
\begin{definiz}
Let $\Omega$ be as in \eqref{ipomega}. A function $u \in C^2(\Omega)$
is called a $k$-convex function (strictly 
$k$-convex) in $\Omega$ if
\begin{equation*}
\Sk_j(D^2u)\geq 0 \text{ }(>0) \quad \text{for }j=1,
\ldots, k.
\end{equation*}
We denote the class of $k$-convex functions in $\Omega$ such
that $u \in C^2(\Omega)\cap C(\bar{\Omega})$ and  $u=0$ on $\partial
\Omega$ by $\Phi^2_k(\Omega)$. 
\end{definiz}
Clearly, the set $\Phi_n^2(\Omega)$
coincides with the set of the convex and $C^2(\Omega)$ functions
vanishing on $\de \Omega$.

If we define with $\Gamma_k$ the following convex open cone
\begin{equation*}
  \Gamma_k=\{ \lambda \in \R^n : S_1(\lambda)>0, S_2(\lambda)>0,
  \ldots, S_k(\lambda)>0\},
\end{equation*}
in \cite{ivo} it is proven that $\Gamma_k$ is the cone of
ellipticity of $\Sk_k$. Hence the $k$-Hessian operator is elliptic
with respect to the $k$-convex functions.

By definition, it follows that the $k$-convex functions are
subharmonic in $\Omega$ and then negative in $\Omega$ if zero on
$\de\Omega$.

We go on by recalling some definitions of convex geometry which will
be largely used in next sections. Standard references for this topic
are \cite{bz}, \cite{schn}.

\subsection
{Quermassintegrals and the Aleksandrov-Fenchel inequalities} 

Let $K$ be a convex body, and let be $\rho>0$. We denote by $|K|$ the
Lebesgue measure of $K$, by $P(K)$ the perimeter of $K$ and by
$\omega_n$ the measure of the unit ball in $\R^n$.

The well-known Steiner formula for the Minkowski sum is
\[
|K+\rho B_1| =\sum_{i=0}^{n} \binom{n}{i} W_i(K) \rho^i.
\]
The coefficient $W_i(K)$, $i=0,\ldots,n$, is known as the $i$-th
quermassintegral of $K$. Some special cases are $W_0(K)=|K|$,
$nW_1(K)=P(K)$, $W_n(K)=\omega_n$. If $K$ has $C^2$ boundary, with
nonvanishing Gaussian curvature, the quermassintegrals can be related
to the principal curvatures of $\de K$. Indeed, in such a case
\[
W_i(K)=\frac 1 n \int_{\de K} H_{i-1} d \mathcal H^{n-1}, \quad
i={1,\ldots n}.
\]
Here $H_j$ denotes the $j$-th normalized elementary symmetric function
of the principal curvatures $\kappa_1,\ldots,\kappa_{n-1}$ of $\de K$,
that is $H_0=1$ and
\[
H_j= \binom{n-1}{j}^{-1} \sum_{1\le i_1\le \ldots \le i_j\le n-1}
\kappa_{i_1}\cdots \kappa_{i_j},\quad j={1,\ldots,n-1}.
\]

An immediate computation shows that if $B_R$ is a ball of radius $R$,
then
\begin{equation}
  \label{querball}
  W_i(B_R)= \omega_n R^{n-i}, \quad i=0,\ldots,n.
\end{equation}
Moreover, the $i$-th quermassintegral, $0\le i \le n$, rescales as
\[
W_{i}(tK)=t^{n-i}W_i(K), \quad t>0.
\]

The Aleksandrov-Fenchel inequalities state that
\begin{equation}
  \label{afineq}
\left( \frac{W_j(K)}{\omega_n} \right)^{\frac{1}{n-j}} \ge \left(
  \frac{W_i(K)}{\omega_n} \right)^{\frac{1}{n-i}}, \quad 0\le i < j
\le n-1,
\end{equation}
where the inequality is replaced by an equality if and only if $K$ is
a ball.

In what follows, we use the Aleksandrov-Fenchel inequalities for
particular values of $i$ and $j$. If $i=1$, and $j=k-1$, we have that
\begin{equation}\label{af-2}
W_{k-1}(K) \ge \omega_n^{\frac{k-2}{n-1}} n^{-\frac{n-k+1}{n-1}}
P(K)^{\frac{n-k+1}{n-1}}, \quad
3\le k \le n.
\end{equation}
When $i=0$ and $j=1$, we have the classical isoperimetric inequality:
\[
P(K) \ge n \omega_n^{\frac 1 n} |K|^{1-\frac 1 n}.
\]
Moreover, if $i=k-1$, and $j=k$, we have
\[
W_k(K) \ge \omega_n^{\frac{1}{n-k+1}} W_{k-1}(K)^{\frac{n-k}{n-k+1}}.
\]

It can be also shown a derivation formula for quermassintegral of
level sets of a function $u \in \Phi_k^2(\Omega)$ (see
\cite{reilly}):
\begin{equation}
  \label{reillyder}
  - \frac{d}{dt} W_{k}(\Omega_t)= \frac{n-k}{n}
  \int_{\Sigma_t}H_{k}(\Sigma_t) |Du|^{-1} d \mathcal H^{n-1}, 
\end{equation}
where $\Sigma_t$ is the boundary of $\Omega_t=\{ -u > t \}$.
Moreover, we recall the following equality (see \cite{reilly} again):
\begin{equation}
\label{introintcurv}
\int_{\Omega_t}\Sk_k (D^2u)dx = \frac{1}{k} \int_{\Sigma_t} H_{k-1} |Du|^k
d\mathcal H^{n-1}.
\end{equation}

\subsection{Rearrangements for quermassintegrals}
Now we recall some basic facts on rearrangements for
quermassintegrals. For an exhaustive treatment of the properties of
such rearrangements we refer the reader, for example, to \cite{ta81}, 
\cite{tso}, \cite{trudiso}.

Let $1\le k\le n$, and denote by $\Omega^*_{k-1}$ the ball centered at the
origin and with the same $W_{k-1}$-measure than $\Omega$, that is
$W_{k-1}(\Omega^*_{k-1})=W_{k-1}(\Omega)$.

The $(k-1)$-symmetrand of a function $u \in \Phi_{k}(\Omega)$,
$k=1,\ldots,n$, is the radially symmetric increasing function
$u^*_{k-1}$, defined in the ball $\Omega^*_{k-1}$, which preserves
the $W_{k-1}$-measure of the level sets of $u$. More precisely, we
have that, for $x\in\Omega^*_{k-1}$, 
\begin{equation*}
u_{k-1}^{*}(x)=-\inf \left\{ t \ge 0 \colon W_{k-1}(\Omega_t) \le
\omega_n\left|x\right|^{n-k+1},\; Du \ne 0 \text{ on } \Sigma_t\right\},
\end{equation*}
where $\Sigma_{t} = \partial \Omega_t=\left\{x \in \Omega\colon
-u(x)>t\right\}$, with $t\ge 0$.

We stress that, for $k=1$, $u^{*}_{0}(x)$ coincides with
the classical Schwarz symmetrand of $u$, while for $k=2$,
$u^*_1(x)$ is the rearrangement of $u$ which preserves the
perimeter of the level sets of $u$. 

Denoting with $R$ the radius of $\Omega^*_{k-1}$, the following
statements hold true (see \cite{tso,trudiso}): 
\begin{enumerate}
    \item writing $u_{k-1}^{*}(x) = \rho(r)$ for
      $r=\left|x\right|$, we have $\rho(0)=\min_{\Omega}(u)$ and
      $\rho(R)=0$,
    \item $\rho(r)$ is a negative and increasing function on
      $\left[0,R\right]$,
    \item $\rho(r) \in C^{0,1}(\left[0,R\right])$ and moreover $0 \le
      \rho'(r)\leq \sup_{\Omega}\left|Du\right|$ almost everywhere.
\end{enumerate}

If the function $u$ has convex level sets, the Aleksandrov-Fenchel
inequalities \eqref{afineq} imply that
$|\{ -u>t \}|\le |\{-u^*_{k-1}>t\}|$ and then, for any $p\ge 1$,
\begin{equation*}
  \label{norme}
\left\|u\right\|_{L^p(\Omega)} \le
\left\|u_{k-1}^{*}\right\|_{L^p(\Omega^*_{k-1})},
\end{equation*}
while, by property $(1)$,
\[
\|u\|_{{L^\infty(\Omega)}} =\|u^*_{k-1}\|_{{L^\infty(\Omega^*_{k-1})}}.
\]
Now, it is possible to define the following functional associated to
the $k$-Hessian operator, known as $k$-Hessian integral: 
\begin{equation*}
I_k\left[u,\Omega \right]=\int_{\Omega} (-u)\Sk_k(D^2u)
 \, dx.
\end{equation*}

In the radial case the Hessian integrals can be defined as
follows:
\begin{equation*}
I_{k}\left[ u^{*}_{k-1}, \Omega^*_{k-1}\right]= 
\binom{n}{k} \omega_n \int_0^R f^{k+1}\big(\omega_n
r^{n-k+1}\big) r^{n-k} \, dr
\end{equation*}
where $f\big(\omega_n \left|x\right|^{n-k+1}\big) = \left|\nabla
u^{*}_{k-1}(x)\right|$.

Finally we recall that for the Hessian integrals the following
extension of P\'olya-Szeg\"o principle holds (see
\cite{trudiso,tso}):
\begin{equation}
\label{pol} 
I_{k}\left[u, \Omega\right]\geq
I_{k}\left[u^{*}_{k-1}, \Omega^*_{k-1}\right], \quad p \geq 1.
\end{equation}
\subsection{Eigenvalue problems for $\Sk_k$}
In this subsection we give a quick review on the main properties of 
eigenvalues and eigenfunctions of the $k$-Hessian operators, namely
the couples $(\lambda,u)$ which solve
\begin{equation}
  \label{autsk}
  \left\{
    \begin{array}{ll}
      S_k(D^2u)=\lambda (-u)^k &\text{in } \Omega,\\
      u=0 &\text{on } \de\Omega.
    \end{array}
  \right.
\end{equation}

The following existence result holds (see \cite{lionsma} for $k=n$,
and \cite{wangeigen}, \cite{geng} in the general case):
\begin{theo}
  Let $\Omega$ as in \eqref{ipomega}. Then, there exists a positive
  constant $\lambda_k(\Omega)$ depending only on $n,k$, and $\Omega$,
  such that problem (\ref{autsk}) admits a 
  solution $u \in C^2(\Omega)\cap C^{1,1}(\overline{\Omega})$,
  negative in $\Omega$, for
  $\lambda=\lambda_k(\Omega)$ and $u$ is unique up to positive
  scalar multiplication. Moreover, $\lambda_k(\Omega)$ has the following
  variational characterization:
  \begin{equation*}
    \lambda_k(\Omega)=\min_{\substack{u \in
        \Phi_k^2(\Omega) \\
        u \ne 0 }} \displaystyle \frac{\int_{\Omega}(-u)
      S_k(D^2u)\,dx}{\int_{\Omega}(-u)^{k+1}\,dx}.
  \end{equation*}
\end{theo}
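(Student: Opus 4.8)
The plan is to produce $\lambda_k(\Omega)$ as the infimum of the Rayleigh quotient over $\Phi_k^2(\Omega)$, to promote a minimizer to a classical solution of \eqref{autsk} by the regularity theory for Hessian equations, and to settle uniqueness by a hidden-convexity argument. Set
\[
\mu=\inf\left\{\frac{I_k[u,\Omega]}{\int_\Omega(-u)^{k+1}\,dx}\;:\;u\in\Phi_k^2(\Omega),\ u\ne 0\right\}.
\]
First I would verify $0<\mu<+\infty$. Since $\Omega$ satisfies \eqref{ipomega} it admits a smooth strictly convex function vanishing on $\partial\Omega$ and negative inside (a suitable multiple of a defining function); such a function is $n$-convex, hence lies in $\Phi_k^2(\Omega)$ for every $k$, so the admissible class is nonempty and $\mu<+\infty$. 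For $\mu>0$ I would invoke the Sobolev inequality for the $k$-Hessian energy, $I_k[u,\Omega]\ge c_{n,k}\|u\|_{L^{q}(\Omega)}^{k+1}$ with $q=\tfrac{n(k+1)}{n-2k}$ if $2k<n$ (any finite $q$ if $2k\ge n$); by H\"older's inequality on the bounded set $\Omega$ this gives $I_k[u,\Omega]\ge c\,\|u\|_{L^{k+1}(\Omega)}^{k+1}$, hence $\mu>0$. Equivalently, after the integration by parts $I_k[u,\Omega]=\tfrac1k\int_\Omega S_k^{ij}(D^2u)\,u_iu_j\,dx$, positivity follows from the positive semidefiniteness of $(S_k^{ij})$ on $\Gamma_k$ together with a Poincar\'e inequality.

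Next I would show the infimum is attained. Take a minimizing sequence with $\int_\Omega(-u_j)^{k+1}=1$, so $I_k[u_j,\Omega]\to\mu$, working in the natural completion of $\Phi_k^2(\Omega)$ under the energy $I_k$. The $k$-Hessian Sobolev inequality bounds $(u_j)$ in this space; the exponent $k+1$ is strictly subcritical, so the embedding into $L^{k+1}(\Omega)$ is compact and a subsequence converges strongly there, the limit $u$ being $k$-convex (in the weak sense) with $u\le 0$ and $\int_\Omega(-u)^{k+1}=1$. By the Trudinger--Wang weak continuity theorem for $k$-Hessian measures one passes to the limit in the energy to get $I_k[u,\Omega]\le\liminf_j I_k[u_j,\Omega]=\mu$, so $u$ is a minimizer; define $\lambda_k(\Omega):=\mu$. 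Any $u\in\Phi_k^2(\Omega)$ being a competitor, this already gives the claimed variational characterization.

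Then I would derive the equation and regularize. Using the divergence form of $S_k$ and the fact that the associated Newton-type tensor $(S_k^{ij})$ is divergence free, one has the first variation identity $\frac{d}{dt}I_k[u+t\varphi,\Omega]\big|_{t=0}=(k+1)\int_\Omega(-\varphi)\,S_k(D^2u)\,dx$ for admissible variations, so stationarity of the Rayleigh quotient yields $S_k(D^2u)=\lambda_k(\Omega)(-u)^k$ in $\Omega$ with $u=0$ on $\partial\Omega$ (admissibility of the variations being harmless once interior strict $k$-convexity of $u$ is known, or one works with Hessian measures). Since the equation is uniformly elliptic along $\Phi_k^2$ with a smooth positive right-hand side, the interior theory for $k$-Hessian equations (Caffarelli--Nirenberg--Spruck, Ivochkina, Trudinger) gives $u\in C^\infty(\Omega)$, while strict convexity and $C^2$ regularity of $\partial\Omega$ give $u\in C^{1,1}(\overline{\Omega})$ by the usual barrier construction; being $k$-convex, $u$ is subharmonic, so $u=0$ on $\partial\Omega$ together with $u\not\equiv 0$ forces $u<0$ in $\Omega$ by the strong maximum principle.

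The main obstacle is uniqueness up to positive multiples. With the sign of the eigenfunction fixed I would use a hidden-convexity (Picone-type) argument: given two normalized minimizers $u,v$ and $t\in[0,1]$, put $w_t=\big((1-t)(-u)^{k+1}+t(-v)^{k+1}\big)^{1/(k+1)}$, observe $\int_\Omega w_t^{k+1}=1$, and prove the convexity estimate $I_k[-w_t,\Omega]\le(1-t)I_k[-u,\Omega]+t\,I_k[-v,\Omega]$; since the right-hand side equals $\lambda_k(\Omega)$ and the left-hand side is $\ge\lambda_k(\Omega)$, equality holds, and its equality case forces $-u$ and $-v$ to be proportional, hence equal after normalization. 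Establishing this inequality for $k\ge 2$ is the delicate point: for $k=1$ it reduces to the classical Picone identity, but for the genuinely nonlinear $S_k$ it rests on the concavity of $S_k^{1/k}$ on $\Gamma_k$ together with a careful expansion of $D^2w_t$ in terms of $D^2u$, $D^2v$ and the gradients. An alternative is a comparison / strong maximum principle argument for the operator linearized at $u$, combined with a Krein--Rutman type simplicity statement for that linearization.
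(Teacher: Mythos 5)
This theorem is stated in the paper without proof; it is imported from the literature, with the authors citing Lions (for $k=n$) and Wang and Jacobsen--Geng (for general $k$). So there is no proof of theirs to compare against; I can only assess your sketch on its own merits and against the cited sources.

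Your plan is the natural variational route: define $\lambda_k$ as the infimum of the Rayleigh quotient, use Wang's $k$-Hessian Sobolev inequality and Trudinger--Wang Hessian measure theory to get coercivity and compactness, extract a minimizer, pass to the Euler--Lagrange equation, and regularize. That scaffolding is sound, and the first-variation computation $\tfrac{d}{dt}I_k[u+t\varphi]\big|_{t=0}=(k+1)\int_\Omega(-\varphi)S_k(D^2u)\,dx$ is correct. However, there are two gaps that you gesture at but do not close, and they are precisely the points where the cited proofs take a different road.

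First, the Euler--Lagrange step is not a formality: $\Phi_k^2(\Omega)$ is a convex cone, not a linear space, so $u+t\varphi$ is generally not admissible for arbitrary $\varphi$ and small $t$. You note this can be circumvented once $u$ is known to be \emph{strictly} $k$-convex in the interior, but strict $k$-convexity is a consequence of the equation plus regularity, which in turn you are trying to derive from stationarity --- the argument as written is circular. The cited references avoid the issue entirely: Lions for the Monge--Amp\`ere case and Wang for general $k$ construct the eigenpair by a continuation/degree-theory scheme applied to auxiliary Hessian equations (roughly, solving $S_k(D^2u)=\lambda((-u)^k+\eps)$ or $S_k(D^2u)=\lambda(-u)^k+f$, obtaining uniform a priori estimates, and passing to a limit), and only afterwards identify $\lambda_k$ with the Rayleigh quotient infimum. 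Second, your uniqueness argument is left open: you propose the hidden-convexity inequality $I_k[-w_t,\Omega]\le(1-t)I_k[-u,\Omega]+tI_k[-v,\Omega]$ for $w_t=((1-t)(-u)^{k+1}+t(-v)^{k+1})^{1/(k+1)}$, but you yourself flag that establishing it for $k\ge 2$ is ``the delicate point'' and you do not prove it; to my knowledge no such Picone-type inequality is available for general $S_k$. The known uniqueness proofs instead use a scaling-and-comparison argument: if $u,v$ are two eigenfunctions, rescale so that $u\le v$ with touching, and apply the strong maximum principle for the (linearized) $k$-Hessian operator together with the ellipticity on $\Gamma_k$. As it stands, both the derivation of the equation and the uniqueness statement contain genuine, unfilled gaps, so the sketch does not constitute a proof, although each step could in principle be made rigorous by substituting the PDE-theoretic arguments of Lions and Wang.
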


As matter of fact, if $k<n$ the above theorem holds under a more
general assumption on $\Omega$, namely requiring that $\Omega$ is
strictly $k$-convex (see \cite{wangeigen}, \cite{geng}).

As matter of fact, we observe that if $k=1$, or $k=n$,
$\lambda_k(\Omega)$ coincides respectively with the first eigenvalue
of the Laplacian operator, or with the eigenvalue of the
Monge-Amp\`ere operator.

A simple but useful property of the eigenvalue $\lambda_k(\Omega)$
is that it rescales as
\begin{equation}
\label{risc_palla}
\lambda_k(t\Omega)=t^{-2k}\lambda_k(\Omega),\quad t>0.  
\end{equation}

If $k=1$, the well-known Faber-Krahn inequality states that
\[
\lambda_1(\Omega) \ge \lambda_1(\Omega\diesis),
\]
where $\Omega\diesis$ is the ball centered at the origin
with the same Lebesgue measure of $\Omega$. Moreover, the equality
holds if $\Omega=\Omega\diesis$. 

In \cite{bt07}, \cite{ga09} it is proved that if $k=n$ and $\Omega$ is
a bounded strictly convex open set, then 
\begin{equation*}
\lambda_n(\Omega) \ge \lambda_n(\Omega_{n-1}^*).
\end{equation*}
In general, in \cite{ga09} it is proven that if $\Omega$ is
a strictly convex set such that the eigenfunctions have convex level
sets, then, for $2\le k \le n$,
\begin{equation}
  \label{fksk}
\lambda_k(\Omega) \ge \lambda_k(\Omega^*_{k-1}).
\end{equation}
\subsection{Asymmetry measures and isoperimetric deficit}
A purpose of this paper is to prove that the difference
between the two sides in \eqref{fksk} controls the exterior and interior
deficiencies, defined as follows (see also \cite{hana94} for $k=1$).
Given $\Omega$ bounded nonempty domain of $\R^n$, denoted by $R$ the
radius of the ball $\Omega^*_{k-1}$, then the exterior and interior
$k$-deficiency of $\Omega$ are, respectively, the nonnegative numbers
\begin{equation}
  \label{defe}
 D_{k}(\Omega)= \frac{R_\Omega}{R}-1,\quad d_k(\Omega)= 1-\frac{r_\Omega}{R},
\end{equation}
where $r_\Omega$ and $R_\Omega$ are the inradius and the circumradius
of $\Omega$. Such numbers give a measure of the asymmetry of $\Omega$
with respect to the ball with the same $(k-1)$-quermassintegral than
$\Omega$. Furthermore, the deficiency of $\Omega$ is
\begin{equation*}
  \Delta(\Omega)=\frac{R_\Omega}{r_\Omega}-1.
\end{equation*}

In order to have a measure of the asymmetry of $\Omega$ in terms of
the Hausdorff distance $d$, we define the following coefficient:
\begin{equation}
  \label{haus}
  \delta_H(\Omega) = \inf \{d(\Omega, \Omega^*_{n-1}+x_0),\; x_0\in \R^n\}.
\end{equation}
We refer to $\delta_H$ as the Hausdorff asymmetry of
$\Omega$.

In the class of convex sets, it is possible to obtain some stability
estimates for the Aleksandrov-Fenchel inequalities \eqref{afineq}.
More precisely, if $s$ denotes the Steiner point of
$\Omega$, then in \cite{grsc} it has been proved that
\begin{equation}
  \label{gshaus}
  \begin{array}{l}
d(\Omega,\Omega^*_{n-1}+s)^{(n+3)/2} \le \bar C 
\frac{P(\Omega)^{(n^2-3)/2}}{|\Omega|^{(n+3)(n-2)/2}} \left[
  \left(\frac{P(\Omega)}{n\omega_n}\right)^n -
  \left(\frac{|\Omega|}{\omega_n}\right)^{n-1} \right],\\
  d(\Omega,\Omega^*_{n-1}+s)^{(n+3)/2}
  \le \bar C_1
  \frac{W_{n-2}(\Omega)W_{n-1}(\Omega)^{\frac{n-1}{2}}}{W_{k-1}(\Omega)^{n-k}} 
  \left( \frac{W_k(\Omega)^{n-k+1}}{\omega_n} -W_{k-1}(\Omega)^{n-k}
  \right),
  \end{array}
\end{equation}
where $\bar C,\bar C_1$ are two constants depending only on $n$, which can be
explicitly determined. These estimates justify the definition of
$\delta_H$.

As matter of fact, in \cite{grsc} it is observed that, the inequalities
\eqref{gshaus} can be rewritten as a Bonnesen-type inequality in terms
of the inradius $r_\Omega$ and the circumradius $R_\Omega$ of $\Omega$:
\begin{equation}
  \label{gs}
  \begin{array}{l}
(R_\Omega-r_\Omega)^{(n+3)/2} \le \tilde C
\frac{P(\Omega)^{(n^2-3)/2}}{|\Omega|^{(n+3)(n-2)/2}}\left[
  \left(\frac{P(\Omega)}{n\omega_n}\right)^n -
  \left(\frac{|\Omega|}{\omega_n}\right)^{n-1} \right],\\
(R_\Omega-r_\Omega)^{(n+3)/2} \le \tilde C_1
  \frac{W_{n-2}(\Omega)W_{n-1}(\Omega)^{\frac{n-1}{2}}}{W_{k-1}(\Omega)^{n-k}} 
  \left( \frac{W_k(\Omega)^{n-k+1}}{\omega_n} - W_{k-1}(\Omega)^{n-k}
  \right).
  \end{array}
\end{equation}

\section{The case of the Monge-Amp\`ere operator}
In this section we consider the eigenvalue problem for the
Monge-Amp\`ere operator,
\begin{equation}\label{eig_eq}
  \left\{
    \begin{array}{ll}
      -\det D^2 u= \lambda (-u)^{n} &
      \text{in }\Omega, \\ [.2cm]
      u = 0 & \text{on }\de\Omega,
    \end{array}
  \right.
\end{equation}
and we prove the first stability result, stated below. 
\begin{theo}
  \label{main}
  Let $\Omega \subset \R^n$ be as in \eqref{ipomega} such that
  \begin{equation}
    \label{small}
    \lambda_n(\Omega) \le (1+\eps)\lambda_n(\Omega_{n-1}^*), 
  \end{equation}
  where $\eps>0$ is sufficiently small and $\Omega_{n-1}^*$ is the
  ball such that $W_{n-1}(\Omega)=W_{n-1}(\Omega_{n-1}^*)$. Then, if
  $\delta_H(\Omega)$ is the Hausdorff asymmetry \eqref{haus}, it holds that
  \begin{equation*}
    \label{maindelta}
   \delta_H(\Omega)\le C_n \eps^{\frac{1}{(n+1)(n+3)}},
 \end{equation*}
 where $C_n$ is a constant which depend only on $n$. Moreover,
 denoting by $d_n(\Omega)$ and $D_n(\Omega)$, respectively, the
 interior and exterior $n$-deficiency of $\Omega$  as in
 \eqref{defe}, we have the following:  
  \begin{enumerate}
\item if $n=2$, then
  \begin{equation}
    \label{defdet2}
    d_2(\Omega) \le C_2 
    \sqrt[6]\eps, \quad
 D_2(\Omega) \le C_2 \sqrt[12]\eps,
    \end{equation}
    where $C_2$ denotes a positive constant which depends only on
    the dimension $n=2$. 
  \item If $n\ge 3$, then
    \begin{equation}
      \label{defdetn}
   d_n(\Omega) \le C_n \eps^{\frac{1}{2n+2}},
      \quad
    D_n(\Omega)\le C_n \eps^{\frac{1}{(n+1)(n+3)}},
    \end{equation}
        where $C_n$ depends only on $n$.
  \end{enumerate}
\end{theo}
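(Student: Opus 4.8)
The strategy is to convert the smallness of the eigenvalue gap into the smallness of an Aleksandrov--Fenchel deficit for $\Omega$ and then to invoke the Bonnesen-type stability estimates \eqref{gs}--\eqref{gshaus}. By the scaling \eqref{risc_palla} and the homogeneity of the quermassintegrals we may assume $W_{n-1}(\Omega)=\omega_n$, so that $\Omega^*_{n-1}=B_1$ and $\lambda_n(\Omega^*_{n-1})=\lambda_n(B_1)$; normalize the eigenfunction $u\in\Phi^2_n(\Omega)$ of $\Omega$ by $\min_\Omega u=-1$ and set $\Omega_t=\{-u>t\}$, $t\in[0,1)$, a decreasing family of convex bodies contracting to the minimum point $x_0$.

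\emph{Step 1: a quantitative Faber--Krahn inequality.} We revisit the proof of \eqref{fksk} for $k=n$, keeping track of the error. Let $u^*_{n-1}$ be the $(n-1)$-symmetrand of $u$ on $B_1$; its super-level set $\{-u^*_{n-1}>t\}$ is the ball of radius $s(t)=W_{n-1}(\Omega_t)/\omega_n$. The Aleksandrov--Fenchel inequalities \eqref{afineq} give $|\Omega_t|\le\omega_n s(t)^n$, hence $\|u\|_{L^{n+1}(\Omega)}\le\|u^*_{n-1}\|_{L^{n+1}(B_1)}$; the P\'olya--Szeg\"o principle \eqref{pol} gives $I_n[u,\Omega]\ge I_n[u^*_{n-1},B_1]$; and $u^*_{n-1}$, being radially increasing, Lipschitz and $n$-convex, is admissible in the variational characterization of $\lambda_n(B_1)$, so $I_n[u^*_{n-1},B_1]\ge\lambda_n(B_1)\,\|u^*_{n-1}\|_{L^{n+1}(B_1)}^{n+1}$. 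Chaining these with \eqref{small} and using the layer-cake formula yields
\[
\int_0^1 t^n\bigl(\omega_n s(t)^n-|\Omega_t|\bigr)\,dt\ \le\ C_n\,\eps .
\]
Since $u$ is convex, $\Omega_t\supseteq t x_0+(1-t)\Omega$, so $|\Omega_t|\ge(1-t)^n|\Omega|$ and $s(t)\ge 1-t$; hence the scale-invariant deficit $\beta(t):=\omega_n s(t)^n/|\Omega_t|-1\ge 0$ obeys $\int_0^{1/2}t^n(1-t)^n\beta(t)\,dt\le C_n\eps$, and for every $\tau\in(0,\tfrac14]$ there is a level $t_0\in[\tau,2\tau]$ with $\beta(t_0)\le C_n\,\eps\,\tau^{-(n+1)}$.

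\emph{Step 2: transfer to $\Omega$ and conclusion.} The inclusions $t_0x_0+(1-t_0)\Omega\subseteq\Omega_{t_0}\subseteq\Omega$ mean that $\Omega_{t_0}$ and $\Omega$ agree up to a homothety of ratio $1+O(\tau)$; in particular $r_\Omega$, $R_\Omega$ and the $W_{n-1}$-radii of $\Omega$ and $\Omega_{t_0}$ coincide up to factors $1+O(\tau)$. Applying to the convex body $\Omega_{t_0}$ the quantitative isoperimetric inequality — the classical Bonnesen inequality when $n=2$, the Gr\"uber--Schneider estimates \eqref{gs}--\eqref{gshaus} when $n\ge3$ — bounds $R_{\Omega_{t_0}}-r_{\Omega_{t_0}}$, and therefore also $R_\Omega-r_\Omega$, $d_n(\Omega)$, $D_n(\Omega)$ and $\delta_H(\Omega)$, by a power of $\beta(t_0)$ and of $\tau$. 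Inserting $\beta(t_0)\le C_n\eps\tau^{-(n+1)}$ and optimizing over $\tau$ then produces the powers of $\eps$ in \eqref{defdet2}, \eqref{defdetn} and in the stated bound for $\delta_H$ — the inradius deficiency $d_n$ turning out to be controlled more tightly than $D_n$, and the planar case being governed by the sharper exponent of Bonnesen's inequality rather than of \eqref{gs}.

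\emph{Main difficulty.} The delicate point is this transfer: the Aleksandrov--Fenchel deficit of a convex body is a boundary-layer quantity, while the comparison of Step~1 only detects it through the weight $t^n$, which degenerates precisely near $\partial\Omega$. It is the convexity of the Monge--Amp\`ere eigenfunction — through the homothety inclusions $\Omega_t\supseteq t x_0+(1-t)\Omega$ together with the $C^{1,1}$-regularity of $u$ — that allows one to carry the information from an interior level $t_0$ back to $\Omega$, at the price of a controlled loss of exponent, and the optimal choice of $t_0$ is what fixes the final powers of $\eps$. One must also ensure that the constants in \eqref{gs}--\eqref{gshaus} stay uniform, i.e.\ dispose of a priori bounds from below for $|\Omega|$ and from above for $\operatorname{diam}\Omega$; these follow from \eqref{small} itself, since a thin or elongated $\Omega$ would force $\lambda_n(\Omega)$ well above $\lambda_n(B_1)$.
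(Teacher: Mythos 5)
The proposal follows the same overall architecture as the paper — a quantitative Faber--Krahn step giving an integral deficit bound, a Chebyshev-type selection of a good level $t_0$, application of Bonnesen or Gr\"uber--Schneider to $\Omega_{t_0}$, and a transfer back to $\Omega$ — but the transfer mechanism is genuinely different. The paper's Lemma~3.2 controls $W_{n-1}(\Omega_\delta)$ from below by inserting $u+\delta$ as a trial function in the Rayleigh quotient on $\Omega_\delta$ and invoking Faber--Krahn once more; you instead exploit the full convexity of the Monge--Amp\`ere eigenfunction via the homothety inclusion $\Omega_t\supseteq tx_0+(1-t)\Omega$, which directly yields $W_{n-1}(\Omega_t)\ge(1-t)W_{n-1}(\Omega)$, $|\Omega_t|\ge(1-t)^n|\Omega|$, and the comparisons between $r_\Omega,R_\Omega$ and $r_{\Omega_{t_0}},R_{\Omega_{t_0}}$. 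Your route is more geometric, but it is specific to $k=n$: for $1\le k<n$ the eigenfunction is only $k$-convex and the homothety inclusion is unavailable, whereas the paper's Rayleigh-quotient argument carries over verbatim (Lemma~4.1 is its $k<n$ analogue). You correctly identify, but do not prove, the need for the a priori lower bound on $|\Omega|$ (the paper's Lemma~3.4), and you omit the Aleksandrov--Fenchel step \eqref{af-2} that converts the $W_{n-1}$--volume deficit into the perimeter--volume deficit required by \eqref{gs}--\eqref{gshaus}. Finally, if you actually perform the announced optimization over $\tau$ — balancing $\tau$ against $(\eps\tau^{-(n+1)})^{2/(n+3)}$ — you obtain $\tau\sim\eps^{2/(3n+5)}$ and hence exponents like $\eps^{2/(3n+5)}$ for $d_n$, $D_n$, $\delta_H$, which are slightly \emph{better} than the paper's $\eps^{1/(2n+2)}$ and $\eps^{1/((n+1)(n+3))}$ (the paper does not optimize but simply sets $\delta^{n+1}=\sqrt\eps$); your claim to reproduce the paper's exponents is thus not quite accurate, though harmlessly so since the stronger estimate implies the stated one.
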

\begin{rem}
  \label{remeq}
 The estimates \eqref{defdet2} and \eqref{defdetn} can be read as
   \[
    \frac{P(\Omega)-P(B_{r_\Omega})}{P(\Omega)} \le C_2 \sqrt[6]\eps,
    \qquad\quad
    \frac{P(B_{R_\Omega})-P(\Omega)}{P(\Omega)} \le C_2 \sqrt[12]\eps,
    \]
and
\begin{equation*}
  \frac{W_{n-1}(\Omega) - W_{n-1}(B_{r_\Omega})}{W_{n-1}(\Omega)}
  \le C_n \eps^{\frac{1}{2n+2}},
  \qquad
  \frac{W_{n-1}(B_{R_\Omega}) - W_{n-1}(\Omega)}{W_{n-1}(\Omega)} \le
  C_n \eps^{\frac{1}{(n+1)(n+3)}},
\end{equation*}
in the spirit of the stability result contained in \cite{melas92}.
\end{rem}
To prove the Theorem, we need some preliminary lemmas.
For $\delta\ge 0$, we denote 
\[
\Omega_\delta = \{x\in \Omega \colon -u > \delta \}. 
\]
In the following result  we estimate  $W_{n-1}(\Omega_{\delta})$ in
term of $W_{n-1}(\Omega)$.  
\begin{lemma}\label{lemma1}
 Under the hypotheses of Theorem \ref{main}, if $u$ is the 
eigenfunction of \eqref{eig_eq} such that $\|u\|_{L^{n+1}(\Omega)}=1$, then for
any $\delta$ such that $0<\delta<\frac 1 2 |\Omega|^{-\frac{1}{n+1}}$, we have
  \[
  W_{n-1}(\Omega_\delta)\ge W_{n-1}(\Omega) (1-\max\{\eps, 2\delta
  |\Omega|^{\frac{1}{n+1}}\}).
  \]
  \begin{proof}
    For $\delta>0$, we compute the Rayleigh quotient of the function
    $\phi=u+\delta$ in $\Omega_\delta$. Then,
    \begin{equation}
      \label{udelta}
          \lambda_n(\Omega_\delta) \le \dfrac{\int_{\Omega_\delta}
      (-\phi)\det D^2 \phi\,dx }{ \int_{\Omega_\delta}
      (-\phi)^{n+1}\,dx} = \dfrac{\int_{\Omega_\delta}
      (-u-\delta)\det D^2 u \,dx }{ \int_{\Omega_\delta}
     (-u-\delta)^{n+1}\,dx}. 
    \end{equation}
   Moreover, being $u$ a solution of \eqref{eig_eq} with
   $\lambda=\lambda_n(\Omega)$, we get, by H\"older inequality, and
   recalling that $\int_\Omega (-u)^{n+1} dx =1$, that   
   \[
   \begin{split}
     \int_{\Omega_\delta}(-u-\delta)\det D^2 u \,dx
     &= \lambda_n(\Omega) \int_{\Omega_\delta} (-u-\delta)(-u)^n\,dx \\
     &\le \lambda_n(\Omega) \left(\int_{\Omega_\delta}
       (-u-\delta)^{n+1}\,dx\right)^{\frac{1}{n+1}}
     \left(\int_{\Omega_\delta}
       (-u)^{n+1}\,dx\right)^{\frac{n}{n+1}} \\
     &\le \lambda_n(\Omega) \left(\int_{\Omega_\delta}
       (-u-\delta)^{n+1}\,dx\right)^{\frac{1}{n+1}}. 
   \end{split}
   \]
   Hence, combining the above estimate with \eqref{udelta} it follows
   that
   \begin{equation}
     \label{udelta2}
   \lambda_n(\Omega_\delta) \le \lambda_n(\Omega) \left(\int_{\Omega_\delta}
       (-u-\delta)^{n+1}\,dx \right)^{-\frac{n}{n+1}}    
   \end{equation}
   On the other hand, by Minkowski inequality and choosing
   $\delta<\frac 1 2 |\Omega|^{-\frac{1}{n+1}}$, we obtain
   that 
   \[
   \begin{split}
   \left(\int_{\Omega_\delta} (-u-\delta)^{n+1}\,dx\right)^{\frac{1}{n+1}}
   &\ge \left( \int_{\Omega_\delta} (-u)^{n+1} \,dx \right)^{\frac
     {1}{n+1}} - \left( \int_{\Omega_\delta} \delta^{n+1} \,dx \right)^{\frac
     {1}{n+1}} \\ &\ge \left(1- \int_{\Omega\setminus\Omega_\delta}
     \delta^{n+1} \,dx \right)^{\frac{1}{n+1}}- \delta
   |\Omega_\delta|^{\frac{1}{n+1}} \\
   &= \left(1-
     \delta^{n+1} \big(|\Omega|-|\Omega_\delta|\big)
   \right)^{\frac{1}{n+1}}- \delta |\Omega_\delta|^{\frac{1}{n+1}} \\
   &\ge 1-\delta\big( |\Omega| - |\Omega_\delta| \big)^{\frac{1}{n+1}}
   - \delta |\Omega_\delta|^{\frac{1}{n+1}} \ge 1-2\delta
   |\Omega|^{\frac {1}{n+1}}.
   \end{split}
   \]
   Hence, from \eqref{udelta2}, \eqref{small} and Faber-Krahn
   inequality it follows that 
   \[
  \lambda_n((\Omega_\delta)^*_{n-1}) \le \lambda_n(\Omega_\delta) \le
  (1+\eps) \lambda_n(\Omega^*_{n-1}) \big( 1-2\delta |\Omega|^{\frac
    {1}{n+1}} \big)^{-n} 
  \]
  which implies, by \ref{risc_palla}, that
  \begin{equation}
    \label{udelta3}
   \left(\frac{W_{n-1}(\Omega_\delta)}{W_{n-1}(\Omega)}\right)^{2n}
   =
   \frac{\lambda_n(\Omega^*_{n-1})}{\lambda_n((\Omega_\delta)^*_{n-1})}  
   \ge
    \frac{\big( 1-2\delta |\Omega|^{\frac
    {1}{n+1}} \big)^n}{1+\eps},
\end{equation}
where we used that the balls $\Omega^*_{n-1}$ and
$(\Omega_\delta)^*_{n-1}$ preserve, respectively, the $(n-1)$-th
quermassintegral of $\Omega$ and $\Omega_\delta$. Hence, by
\eqref{udelta3} we get that 
\[
\frac{W_{n-1}(\Omega_\delta)}{W_{n-1}(\Omega)}
\ge \left( 1 - \frac{\eps+ 2 \delta |\Omega|^{\frac
      {1}{n+1}}}{1+\eps} \right)^{1/2} \ge
1-\max\{\eps, 2\delta |\Omega|^{\frac{1}{n+1}}\},
\]
obtaining the thesis.
 \end{proof}
\end{lemma}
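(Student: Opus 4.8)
The plan is to estimate $\lambda_n(\Omega_\delta)$ from above by testing its variational characterization with the truncated eigenfunction $\phi=u+\delta$, which satisfies $-\phi=-u-\delta>0$ in $\Omega_\delta$ and vanishes on $\Sigma_\delta=\partial\Omega_\delta$. Since $D^2\phi=D^2u$ and $u$ solves \eqref{eig_eq} with $\lambda=\lambda_n(\Omega)$, we have $\det D^2\phi=\det D^2u=\lambda_n(\Omega)(-u)^n$ in $\Omega_\delta$, whence
\[
\lambda_n(\Omega_\delta)\le\frac{\int_{\Omega_\delta}(-u-\delta)\det D^2u\,dx}{\int_{\Omega_\delta}(-u-\delta)^{n+1}\,dx}=\lambda_n(\Omega)\,\frac{\int_{\Omega_\delta}(-u-\delta)(-u)^n\,dx}{\int_{\Omega_\delta}(-u-\delta)^{n+1}\,dx}.
\]
First I would apply H\"older's inequality to the numerator (with exponents $n+1$ and $(n+1)/n$) and use the normalization $\int_\Omega(-u)^{n+1}\,dx=1$, which forces $\int_{\Omega_\delta}(-u)^{n+1}\,dx\le1$; this yields the clean bound $\lambda_n(\Omega_\delta)\le\lambda_n(\Omega)\big(\int_{\Omega_\delta}(-u-\delta)^{n+1}\,dx\big)^{-n/(n+1)}$.

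Next I would bound $\int_{\Omega_\delta}(-u-\delta)^{n+1}\,dx$ from below. Minkowski's inequality in $L^{n+1}(\Omega_\delta)$ gives $\big(\int_{\Omega_\delta}(-u-\delta)^{n+1}\,dx\big)^{1/(n+1)}\ge\big(\int_{\Omega_\delta}(-u)^{n+1}\,dx\big)^{1/(n+1)}-\delta|\Omega_\delta|^{1/(n+1)}$; since $0\le-u\le\delta$ on $\Omega\setminus\Omega_\delta$ one has $\int_{\Omega_\delta}(-u)^{n+1}\,dx\ge1-\delta^{n+1}(|\Omega|-|\Omega_\delta|)$. Using the smallness assumption $\delta<\frac12|\Omega|^{-1/(n+1)}$, the subadditivity of $t\mapsto t^{1/(n+1)}$, and $|\Omega_\delta|\le|\Omega|$, all this collapses to $\big(\int_{\Omega_\delta}(-u-\delta)^{n+1}\,dx\big)^{1/(n+1)}\ge1-2\delta|\Omega|^{1/(n+1)}$.

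Combining the two steps and invoking the Faber--Krahn inequality \eqref{introfksk} for $\Omega_\delta$ (with $k=n$) together with the hypothesis \eqref{small}, I obtain $\lambda_n((\Omega_\delta)^*_{n-1})\le(1+\eps)\lambda_n(\Omega^*_{n-1})\big(1-2\delta|\Omega|^{1/(n+1)}\big)^{-n}$. Writing $a=2\delta|\Omega|^{1/(n+1)}$ and passing to quermassintegrals via the scaling \eqref{risc_palla} of $\lambda_n$ and the identity $W_{n-1}(B_R)=\omega_nR$ --- recalling that $\Omega^*_{n-1}$ and $(\Omega_\delta)^*_{n-1}$ are precisely the balls with the same $W_{n-1}$ as $\Omega$ and $\Omega_\delta$ --- the last inequality becomes $\big(W_{n-1}(\Omega_\delta)/W_{n-1}(\Omega)\big)^{2n}\ge(1-a)^n/(1+\eps)$. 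Taking $2n$-th roots and using $(1+\eps)^{1/(2n)}\le(1+\eps)^{1/2}$ and $(1-t)^{1/2}\ge1-t$, I conclude $W_{n-1}(\Omega_\delta)/W_{n-1}(\Omega)\ge\big(1-\frac{\eps+a}{1+\eps}\big)^{1/2}\ge1-\max\{\eps,a\}$ for $\eps$ small, which is exactly the assertion.

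The step I expect to need the most care is the first one, namely checking that $\phi=u+\delta$ is a legitimate competitor in the variational problem on $\Omega_\delta$ and, relatedly, that the geometric hypotheses ensuring the Faber--Krahn inequality do propagate to the super-level set $\Omega_\delta$; here one uses that $u$ is convex and lies in $C^{1,1}(\overline\Omega)$, so that its level sets are convex and $\Omega_\delta$ is an admissible domain for almost every $\delta$. Everything after that is a routine chain of H\"older-, Minkowski- and Bernoulli-type estimates combined with the scaling \eqref{risc_palla}.
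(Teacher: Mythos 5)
Your proposal reproduces the paper's argument step by step: you test the Rayleigh quotient on $\Omega_\delta$ with the same competitor $\phi=u+\delta$, apply the same H\"older and Minkowski estimates to obtain $\big(\int_{\Omega_\delta}(-u-\delta)^{n+1}\big)^{1/(n+1)}\ge1-2\delta|\Omega|^{1/(n+1)}$, and then invoke Faber--Krahn together with \eqref{small} and the scaling \eqref{risc_palla} to deduce \eqref{udelta3} and the final bound. This is essentially identical to the paper's proof of Lemma \ref{lemma1}; your closing remark that the level set $\Omega_\delta$ inherits the admissibility needed for the variational and Faber--Krahn steps (via convexity and $C^{1,1}$ regularity of $u$) is a reasonable observation that the paper leaves implicit.
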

The second lemma we need is the following.
\begin{lemma}
Under the hypotheses of Theorem \ref{main}, if $\Omega_t=\{-u>t\}$,
and $u$ is the eigenfunction of \eqref{eig_eq} in $\Omega$ such that
$\|u\|_{L^{n+1}(\Omega)}=1$, then
\begin{equation}
  \label{eq:3}
  \int_0^{+\infty} t^n \big( W_{n-1}(\Omega_t)^n
- \omega_n^{n-1}|\Omega_t|\big) dt \le \frac{\omega_n^{n-1}}{n+1} \eps. 
\end{equation}
\end{lemma}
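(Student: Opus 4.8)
\emph{The plan} is to reduce \eqref{eq:3} to the single inequality
\[
\norm{u^*_{n-1}}_{L^{n+1}(\Omega^*_{n-1})}^{n+1}\le 1+\eps
\]
by means of two elementary layer--cake identities, and then to deduce the latter from the P\'olya--Szeg\"o inequality \eqref{pol} for Hessian integrals together with the smallness hypothesis \eqref{small}.

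\emph{Step 1: reduction.} By Fubini's theorem, for any nonpositive $v$ one has $\int(-v)^{n+1}\,dx=(n+1)\int_0^{+\infty}t^n|\{-v>t\}|\,dt$. Applied to $v=u$ with $\norm{u}_{L^{n+1}(\Omega)}=1$, this gives $\int_0^{+\infty}t^n|\Omega_t|\,dt=\frac1{n+1}$. On the other hand, by the definition of the $(n-1)$-symmetrand the superlevel set $\{-u^*_{n-1}>t\}$ is the ball centered at the origin with $W_{n-1}$-measure equal to $W_{n-1}(\Omega_t)$; by \eqref{querball} such a ball has radius $W_{n-1}(\Omega_t)/\omega_n$, hence $|\{-u^*_{n-1}>t\}|=\omega_n^{1-n}W_{n-1}(\Omega_t)^n$. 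Feeding $v=u^*_{n-1}$ into the same identity,
\[
\int_0^{+\infty}t^nW_{n-1}(\Omega_t)^n\,dt=\frac{\omega_n^{n-1}}{n+1}\,\norm{u^*_{n-1}}_{L^{n+1}(\Omega^*_{n-1})}^{n+1}.
\]
Subtracting the two identities, the left--hand side of \eqref{eq:3} equals $\frac{\omega_n^{n-1}}{n+1}\big(\norm{u^*_{n-1}}_{L^{n+1}(\Omega^*_{n-1})}^{n+1}-1\big)$, so \eqref{eq:3} is exactly equivalent to the inequality displayed above. (All the integrals are finite, since $u^*_{n-1}$ is bounded on the bounded ball $\Omega^*_{n-1}$; in addition the integrand of \eqref{eq:3} is nonnegative by the Aleksandrov--Fenchel inequality \eqref{afineq} with $i=0,\ j=n-1$, in accordance with the bound.)

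\emph{Step 2: the reduced inequality.} Testing the variational characterization of $\lambda_n$ on the ball $\Omega^*_{n-1}$ with the function $u^*_{n-1}$, then using \eqref{pol}, and finally the identity $I_n[u,\Omega]=\int_\Omega(-u)S_n(D^2u)\,dx=\lambda_n(\Omega)\int_\Omega(-u)^{n+1}\,dx=\lambda_n(\Omega)$, we obtain
\[
\lambda_n(\Omega^*_{n-1})\le\frac{I_n[u^*_{n-1},\Omega^*_{n-1}]}{\norm{u^*_{n-1}}_{L^{n+1}(\Omega^*_{n-1})}^{n+1}}\le\frac{I_n[u,\Omega]}{\norm{u^*_{n-1}}_{L^{n+1}(\Omega^*_{n-1})}^{n+1}}=\frac{\lambda_n(\Omega)}{\norm{u^*_{n-1}}_{L^{n+1}(\Omega^*_{n-1})}^{n+1}}.
\]
Rearranging and invoking \eqref{small}, $\norm{u^*_{n-1}}_{L^{n+1}(\Omega^*_{n-1})}^{n+1}\le\lambda_n(\Omega)/\lambda_n(\Omega^*_{n-1})\le 1+\eps$, which is precisely what is needed.

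\emph{Main obstacle.} The only point requiring care is the first inequality in the last display: a priori $u^*_{n-1}$ is merely a radial Lipschitz function and need not belong to $\Phi_n^2(\Omega^*_{n-1})$ (it may fail to be $C^2$, or to be convex), so one must justify that it is a legitimate competitor for $\lambda_n(\Omega^*_{n-1})$, with $I_n[u^*_{n-1},\Omega^*_{n-1}]$ understood through the radial formula of Section~2.2. This is exactly the rearrangement machinery for Hessian integrals of \cite{tso,trudiso} that already underlies the Faber--Krahn inequality \eqref{fksk} proved in \cite{ga09}: one approximates $u^*_{n-1}$ by smooth, strictly convex, radial functions vanishing on $\partial\Omega^*_{n-1}$ (or argues directly with the radial Hessian integral) and passes to the limit. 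Granting this, the rest of the argument is elementary.
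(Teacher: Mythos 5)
Your proof is correct and follows essentially the same route as the paper: both reduce the claim, via the layer-cake identity, to showing $\norm{u^*_{n-1}}_{L^{n+1}}^{n+1}-1\le\eps$, and both obtain this by combining the variational characterization of $\lambda_n(\Omega^*_{n-1})$ tested on $u^*_{n-1}$, the P\'olya--Szeg\"o inequality \eqref{pol}, and the smallness hypothesis \eqref{small}. Your three-term chain $\lambda_n(\Omega^*_{n-1})\norm{u^*_{n-1}}^{n+1}\le I_n[u^*_{n-1}]\le I_n[u]=\lambda_n(\Omega)\le(1+\eps)\lambda_n(\Omega^*_{n-1})$ is a slightly streamlined rewriting of the paper's \eqref{eq:2}, and the admissibility caveat you flag for $u^*_{n-1}$ is likewise implicit in the paper's argument and resolved by the same rearrangement machinery of \cite{tso,trudiso,ga09}.
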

\begin{proof}
  We consider the difference of the eigenvalues related to the
  sets $\Omega$ and $\Omega^*_{n-1}$. Choosing $u$ as the normalized
  eigenfunction of \eqref{eig_eq} in $\Omega$, using the
  P\'olya-Szeg\"o principle \eqref{pol}
  \begin{equation}
    \label{eq:2}
  \begin{split}
  \lambda_n(\Omega) - \lambda_n(\Omega^*_{n-1}) & \ge
    \int_\Omega (-u)\det D^2 u\,dx -
\frac{\int_{\Omega^*_{n-1}}(-u_{n-1}^* )\det D^2 
  u_{n-1}^*\,dx }{\int_{\Omega^*_{n-1}}(-u_{n-1}^* )^{n+1} \,dx}  \\[.2cm]
& \ge \frac{\int_{\Omega^*_{n-1}}(-u_{n-1}^* )\det D^2 
  u_{n-1}^*\,dx }{\int_{\Omega^*_{n-1}}(-u_{n-1}^* )^{n+1} \,dx}
\left(\int_{\Omega^*_{n-1}}(-u_{n-1}^* )^{n+1}
    \,dx - 1 \right) \\[.2cm]
&\ge \lambda_n(\Omega^*_{n-1})\left(\int_{\Omega^*_{n-1}}(-u_{n-1}^*
  )^{n+1} \,dx -1 \right).
  \end{split}
  \end{equation}
  On the other hand, recalling that $u$ has normalized $L^{n+1}$ norm,
  the coarea formula and an integration by parts give that
  \[
  \begin{split}
  \int_{\Omega^*_{n-1}}(-u_{n-1}^*)^{n+1} \,dx -1 &= 
  (n+1)\int_0^{+\infty} t^n \big(
  |\{-u^*_{n-1}>t\}|-|\{-u>t\}|\big) dt \\
  &= (n+1) \omega_n^{1-n} \int_0^{+\infty} t^n \big(
    W_{n-1}(\Omega_t)^n - \omega_n^{n-1}|\Omega_t|\big).
  \end{split}
  \]
  Hence, joining \eqref{eq:2} with the above equality, and using
  \eqref{small} we obtain that
  \[
  \int_0^{+\infty} t^n \big( W_{n-1}(\Omega_t)^n
  - \omega_n^{n-1}|\Omega_t|\big) dt \le
  \frac{\omega_n^{n-1}}{n+1} \eps,
  \]
  that is the thesis.
\end{proof}
Last lemma plays a key role in order to obtain that the constant $C_n$
involved in (1) and (2) in Theorem \ref{main} is independent on
$\Omega$. 
\begin{lemma}
\label{boundmis}
Under the hypotheses of Theorem \ref{main}, it holds that
\begin{equation*}
|\Omega| \ge \tilde C_n \left[W_{n-1}(\Omega)\right]^n,
\end{equation*}
where $\tilde C_n$ denotes a positive constant depending only on $n$.
\end{lemma}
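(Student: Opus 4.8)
The plan is to produce an elementary \emph{lower} bound for $\lambda_n(\Omega)$ in terms of the diameter and the volume of $\Omega$, to play it off against the smallness hypothesis \eqref{small}, and finally to eliminate the diameter by using that for a convex body the quermassintegral $W_{n-1}$ controls the circumradius. Throughout, let $u\in\Phi_n^2(\Omega)$ be the eigenfunction of \eqref{eig_eq} normalized by $\int_\Omega(-u)^{n+1}\,dx=1$; being $n$-convex it is convex, negative in $\Omega$ and $0$ on $\de\Omega$, so $M:=\|u\|_{L^\infty(\Omega)}=-u(x_0)$ for some $x_0\in\Omega$. Write $D$ for the diameter of $\Omega$.

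\textbf{Step 1 (a cheap lower bound for $\lambda_n(\Omega)$).} From $1=\int_\Omega(-u)^{n+1}\,dx\le M^{n+1}|\Omega|$ we get $M\ge|\Omega|^{-1/(n+1)}$. Next, for every $p\in\R^n$ with $|p|<M/D$ the function $x\mapsto u(x)-\langle p,x\rangle$ attains its minimum over $\overline\Omega$ at an interior point: on $\de\Omega$ it equals $-\langle p,x\rangle=-\langle p,x_0\rangle-\langle p,x-x_0\rangle>-\langle p,x_0\rangle-|p|D>-\langle p,x_0\rangle-M=u(x_0)-\langle p,x_0\rangle$, so the boundary values strictly exceed the value at $x_0$; at the interior minimizer $\nabla u=p$. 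Hence $\nabla u(\Omega)\supseteq B(0,M/D)$, and the area formula applied to $\nabla u\in C^1(\Omega;\R^n)$ together with $\det D^2u\ge0$ gives
\[
\int_\Omega\det D^2u\,dx\ \ge\ |\nabla u(\Omega)|\ \ge\ \omega_n\Big(\tfrac{M}{D}\Big)^n .
\]
On the other hand, using the equation $\det D^2u=\lambda_n(\Omega)(-u)^n$ and H\"older's inequality with the normalization,
\[
\int_\Omega\det D^2u\,dx=\lambda_n(\Omega)\int_\Omega(-u)^n\,dx\le\lambda_n(\Omega)\,|\Omega|^{\frac1{n+1}} .
\]
Comparing the two displays and using $M\ge|\Omega|^{-1/(n+1)}$ yields $\lambda_n(\Omega)\ge \dfrac{\omega_n}{D^{\,n}\,|\Omega|}$.

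\textbf{Step 2 (insert the hypothesis).} Since $\Omega^*_{n-1}$ is the ball of radius $R=W_{n-1}(\Omega)/\omega_n$, the scaling law \eqref{risc_palla} gives $\lambda_n(\Omega^*_{n-1})=R^{-2n}\lambda_n(B_1)=\omega_n^{2n}W_{n-1}(\Omega)^{-2n}\lambda_n(B_1)$. Combining Step 1 with \eqref{small},
\[
\frac{\omega_n}{D^{\,n}|\Omega|}\ \le\ \lambda_n(\Omega)\ \le\ (1+\eps)\,\omega_n^{2n}\,W_{n-1}(\Omega)^{-2n}\,\lambda_n(B_1),
\]
so that $|\Omega|\ \ge\ \dfrac{W_{n-1}(\Omega)^{2n}}{(1+\eps)\,\omega_n^{2n-1}\,\lambda_n(B_1)\,D^{\,n}}$.

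\textbf{Step 3 (control $D$ by $W_{n-1}$ and conclude).} It is classical that for a convex body the mean width — which up to the dimensional factor $\omega_n/2$ is exactly $W_{n-1}$ — controls the diameter: $\Omega$ contains a segment of length $D$, the mean width is monotone under inclusion and equals a positive dimensional constant times the length on a segment, whence $D\le c_n W_{n-1}(\Omega)$ with $c_n$ depending only on $n$. Substituting $D^{\,n}\le c_n^{\,n}W_{n-1}(\Omega)^n$ above gives
\[
|\Omega|\ \ge\ \frac{W_{n-1}(\Omega)^{n}}{(1+\eps)\,c_n^{\,n}\,\omega_n^{2n-1}\,\lambda_n(B_1)},
\]
and since $\eps$ is small (say $\eps\le1$) this is the claim with $\tilde C_n=\big(2\,c_n^{\,n}\,\omega_n^{2n-1}\,\lambda_n(B_1)\big)^{-1}$, which depends only on $n$.

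The genuinely nonobvious point is Step 1 — realizing that a lower bound for $\lambda_n(\Omega)$ is available at no cost from the fact that the gradient image of a convex function vanishing on $\de\Omega$ contains a ball of radius $M/D$, and that the natural scale appearing there is $\operatorname{diam}\Omega$. Step 3 is exactly what is needed to turn the resulting estimate into one with a purely dimensional constant, and it is here that convexity of $\Omega$ is used essentially: without it the ``reverse'' bound $W_{n-1}(\Omega)\gtrsim\operatorname{diam}\Omega$ (hence $|\Omega|\gtrsim W_{n-1}(\Omega)^n$) fails badly, which is consistent with the fact that $|\Omega|\le\omega_n^{1-n}W_{n-1}(\Omega)^n$ — the reverse inequality, saturated on balls — holds for \emph{all} convex $\Omega$ by Aleksandrov--Fenchel, so the hypothesis \eqref{small} is indispensable.
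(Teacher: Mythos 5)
Your proof is correct, but it takes a genuinely different route from the paper's. The paper's argument is level-set based: it integrates the eigenvalue equation on $\Omega_t=\{-u>t\}$, invokes the curvature identity \eqref{introintcurv} to write $\int_{\Omega_t}\det D^2u\,dx=\frac1n\int_{\Sigma_t}H_{n-1}|Du|^n\,d\mathcal H^{n-1}$, applies H\"older together with $\int_{\Sigma_t}H_{n-1}\,d\mathcal H^{n-1}=n\omega_n$ and Reilly's derivative formula \eqref{reillyder}, and then integrates the resulting differential inequality for $W_{n-1}(\Omega_t)$ from $t=0$ to $\|u\|_\infty$; only afterwards does it invoke \eqref{small} and scaling. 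You instead bypass all the level-set and quermassintegral machinery in Step 1 by using the classical Aleksandrov gradient-mapping estimate — that $\nabla u(\Omega)\supseteq B(0,M/D)$ for a convex function vanishing on $\de\Omega$ — combined with the area formula, to get the clean lower bound $\lambda_n(\Omega)\ge\omega_n/(D^n|\Omega|)$; the diameter is then eliminated via monotonicity of mean width. Your approach is more elementary and self-contained (it does not require \eqref{introintcurv} or \eqref{reillyder}), and it isolates nicely why convexity of $\Omega$ is essential through the inequality $D\le c_nW_{n-1}(\Omega)$. The trade-off is that the paper's argument is written so as to transfer verbatim to the general case $1\le k\le n-1$ (Lemma \ref{boundmisk}), whereas your Step 1 is genuinely Monge--Amp\`ere specific: the gradient-image trick has no direct analogue for $\Sk_k$ with $k<n$. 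Both arguments, after inserting \eqref{small} and the scaling \eqref{risc_palla}, close in the same way.
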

\begin{proof}
Let $u$ be an eigenfunction of \eqref{eig_eq} corresponding to the
eigenvalue $\lambda=\lambda_n(\Omega)$. Then
\begin{equation}
\label{eq}
\det D^2u = \lambda (-u)^n \quad \text{ in }\Omega.
\end{equation} 
Integrating both sides in \eqref{eq} on the level set
$\Omega_t=\{-u>t\}$, and denoting by $\Sigma_t=\de \Omega_t=\{-u=t\}$,
and using the H\"older inequality we have
\begin{multline}
\label{sinistra}
\int_{\Omega_t}\det D^2u\,dx = \frac{1}{n} \int_{\Sigma_t} H_{n-1}
|Du|^n d\mathcal H^{n-1}
\ge \\ \ge \frac{1}{n} \frac{\left(\int_{\Sigma_t} H_{n-1} d\mathcal
      H^{n-1}\right)^{n+1}}{ \left(\int_{\Sigma_t}  H_{n-1} |Du|^{-1} d\mathcal
  H^{n-1}\right)^{n}} = \frac{1}{n}
\frac{(n\omega_n)^{n+1}}{\left(-\frac{d}{dt}W_{n-1}(\Omega_t)\right)^n}. 
\end{multline}
Last inequality follows by the H\"older inequality and the properties of
quermassintegrals. Moreover, being $|\Omega_t|\le |\Omega|$, we have
\begin{equation}
\label{destra}
\left(\int_{\Omega_t}(-u)^n\,dx \right)^\frac{1}{n} \le
|\Omega|^{\frac 1 n} \|u\|_{L^{\infty}(\Omega)}.
\end{equation}
Putting togheter \eqref{sinistra} and \eqref{destra}, by \eqref{eq} we
get
\[
-\frac{d}{dt}W_{n-1}(\Omega_t) \ge n \omega_n^{1+\frac{1}{n}}
\lambda^{-\frac{1}{n}}|\Omega|^{-\frac 1 n} \|u\|_{L^\infty(\Omega)}^{-1}
\]
and, integrating between $0$ and $\|u\|_{L^{\infty}(\Omega)}$, 
\[
W_{n-1}(\Omega) \ge n \omega_n^{1+\frac{1}{n}}
\lambda^{-\frac{1}{n}}|\Omega|^{-\frac{1}{n}},
\]
that is, being $\lambda=\lambda_n(\Omega)\le
(1+\eps)\lambda_n(\Omega^*_{n-1})$,
\begin{equation}
  \label{misbas}
|\Omega|^{\frac 1 n} \ge n \omega_n^{1+\frac 1 n} W_{n-1}(\Omega)^{-1}
\lambda_{n}(\Omega^*_{n-1})^{-\frac{1}{n}} (1+\eps)^{-\frac 1 n}.
\end{equation}
As matter of fact, being $W_{n-1}(\Omega)=W_{n-1}(\Omega^*_{n-1})$,
properties \eqref{risc_palla} and \eqref{querball} give that
\[
\lambda_n(\Omega^*_{n-1})= \left(
  \frac{W_{n-1}(\Omega)}{\omega_n}\right)^{-2n} \lambda_n(B),
\]
where $B=\{|x|<1\}$. Then \eqref{misbas} becomes
\[
|\Omega|^{\frac 1 n} \ge n \omega_n^{\frac 1 n - 1} W_{n-1}(\Omega)
\lambda_n(B)^{-\frac 1 n} (1+\eps)^{-\frac 1 n},
\]
and this concludes the proof.
\end{proof}
Now we are in position to prove the main theorem of this section.
\begin{proof}[Proof of the Theorem \ref{main}]
  First of all, we observe that the quotient
  \[
  \frac{W_{n-1}(K)-W_{n-1}(L)}{W_{n-1}(K)} 
  \]
  is rescaling invariant, hence we suppose that
  $W_{n-1}(\Omega)=1$. Consequently, by Lemma \ref{boundmis} and the
  Aleksandrov-Fenchel inequality, we have that there exists two
  positive constants $c_1(n)$ and $c_2(n)$, which depend only on the
  dimension, such that
  \begin{equation}
    \label{bound}
   c_1(n) \le |\Omega| \le c_2(n).
  \end{equation}
  
For $\delta$ as in Lemma \ref{lemma1}, by \eqref{eq:3} we get that
  \[
  \begin{split}
  \inf_{0\le t \le \delta}\big\{ W_{n-1}(\Omega_t)^n
  - \omega_n^{n-1}|\Omega_t|\big\} &\le \frac {n+1} {\delta^{n+1}}
    \int_0^{\delta} t^n \big( W_{n-1}(\Omega_t)^n
    - \omega_n^{n-1}|\Omega_t|\big) dt \\
    &\le \omega_{n}^{n-1} \frac {\eps}{\delta^{n+1}} =
    \omega_{n}^{n-1} \sqrt\eps,
  \end{split}
  \]
  where we finally choose $\delta^{n+1}=\sqrt{\eps}$. Hence, this
  gives that there exists $0\le \tau \le \delta$ such that
  \begin{equation}
    \label{eq:4}
     W_{n-1}(\Omega_\tau)^n \le \omega_n^{n-1}|\Omega_\tau| +
     \omega_{n}^{n-1} \sqrt\eps.
   \end{equation}
   \fbox{\bf Case $n=2$.} In such a case, \eqref{eq:4} becomes
   \begin{equation*}
     \label{eq:7}
     \frac{P(\Omega_\tau)^2}{4\pi}- |\Omega_\tau| \le \sqrt \eps.
   \end{equation*}
   Then, denoting by $r_\tau$ and $R_\tau$ the inradius and the
   circumradius of $\Omega_\tau$ respectively, and by $\rho_\tau$ the
   radius of 
   $(\Omega_\tau)^*_1$, using the Bonnesen inequality (see for
   example \cite{oss79}, and \cite{afn09,afn11} for some related
   questions) we have
   \[
   (\rho_\tau-r_\tau)^2 \le (R_\tau-r_\tau)^2 \le \sqrt\eps.
   \]
   Being $2\pi \rho_\tau = P(\Omega_\tau) $, we have by Lemma
   \ref{lemma1}, for $\eps$ sufficiently small, that  
   \[
   r_\tau\ge \frac{ P(\Omega_\tau) }{2\pi} -\sqrt[4]{\eps}\ge
   \frac{P(\Omega)}{2\pi}\left( 1- 2\sqrt[6]{\eps}|\Omega|^{\frac 1 3}
   \right)-\sqrt[4]{\eps}\ge R(1-C_{2}|\Omega|^{\frac
     1 3} \sqrt[6]{\eps}),
   \]
   where $R=\frac{P(\Omega)}{2\pi}$ is the radius of $\Omega^*_1$ and
   $C_{2}$ denotes a constant which depends only on the dimension
   $n=2$. Being $r_\tau < r_\Omega$, by \eqref{bound} we have that 
   \begin{equation}
     \label{eq:6}
     d_2(\Omega) \le 1-
     \frac{r_\tau}{R} \le C_{2}|\Omega|^{\frac
       1 3} \sqrt[6] \eps\le C_{2} \sqrt[6]{\eps}
   \end{equation}
   where $B_{r_\tau}$ is a ball of radius $r_\tau$ contained in
   $\Omega$. Then, by \eqref{eq:6} and being $P(\Omega)=2$, we have
   that 
\begin{equation}
  \label{eq:8}
  \left( \frac{P^2(\Omega)}{4\pi} -|\Omega| \right)^{\frac 1 2}\le
  \left( \frac{\big(P(B_{r_\Omega})+C_2\sqrt[6]{\eps}\big)^2}{4\pi} - 
  |B_{r_\Omega}| \right)^{\frac 1 2} \le C_2  \sqrt[12]\eps,
\end{equation}
where last inequality follows being $P(B_{r_\Omega})\le P(\Omega)=2$.
Then, \eqref{eq:8}, \eqref{bound} and \eqref{gshaus} give
\[
\delta_H(\Omega) \le C_2 \sqrt[15] \eps.
\]
On the other hand, applying to \eqref{eq:8} the well-known Bonnesen
inequality, we get that
   \[
     D_2(\Omega) \le 2\pi(R_\Omega-r_\Omega)\le C_2 \sqrt[12] \eps.
   \] 
\fbox{\bf Case $n> 2$.}
From \eqref{eq:4} and the Aleksandrov-Fenchel inequalities
\eqref{af-2} with $k=n$, we have that
\[
\frac{P(\Omega_\tau)^{\frac{n}{n-1}}}{(n^n\omega_n)^{\frac{1}{n-1}}}\le
|\Omega_\tau|+\sqrt \eps.
\]
Hence, by \eqref{bound} and for $\eps$ sufficiently small, an
elementary inequality gives that
\[
\frac{P(\Omega_\tau)^{{n}}}{n^n\omega_n}\le
|\Omega_\tau|^{n-1}+C_n\sqrt \eps.
\]
Then, applying the stability result \eqref{gs}, and using again
\eqref{bound}, it follows that
\begin{equation}
  \label{gs2}
  (R_\tau-r_\tau)^{(n+3)/2} \le C_n
  \frac{P(\Omega)^{(n^2-3)/2}}{|\Omega|^{(n+3)(n-2)/2}} \sqrt\eps\le
  C_n\sqrt \eps,  
\end{equation}
where, as before, $R_\tau$ and $r_\tau$ are, respectively, the
circumradius and the inradius of $\Omega_\tau$.

Now, let $\rho_\tau$ be the radius of the ball
$(\Omega_\tau)^*_{n-1}$, having the same $W_{n-1}$ measure of
$\Omega_\tau$. Similarly as before, being $\rho_\tau <R_\tau$, by
\eqref{gs2}, Lemma \ref{lemma1} and \eqref{bound}, for $\eps$
sufficiently small we have
\begin{multline}
  \label{catenella}
  r_\tau \ge \rho_\tau - C_n \eps^{\frac{1}{n+3}} =
  \frac{W_{n-1}(\Omega_\tau)}{\omega_n} - C_n
  \eps^{\frac{1}{n+3}} \ge \\ \ge
  \omega_n^{-1}W_{n-1}(\Omega)\left(1-2\eps^{1/(2n+2)}|\Omega|^{\frac{1}{n+1}}
  \right) - C_n \eps^{\frac{1}{n+3}} \ge  \\ \ge
  R\left(1- C_n \eps^{1/(2n+2)}
  \right),
\end{multline}
where $R=\omega_n^{-1}W_{n-1}(\Omega)$ is the radius of the ball
$\Omega^*_{n-1}$.
Denoting again with $r_\Omega$ the inradius of $\Omega$, we have that
$r_\tau \le r_\Omega$ and
\begin{equation*}
  d_n(\Omega) \le 1- \frac{r_\tau}{R} \le C_n \eps^{1/(2n+2)}.
\end{equation*}
As matter of fact, by the Aleksandrov-Fenchel inequalities,
\eqref{catenella} and being $W_{n-1}(\Omega)=1$, it follows that
\begin{equation}\label{catena} 
  \begin{split}   
   \left[\left(
        \frac{P(\Omega)}{n\omega_n} \right)^n - \left( \frac{ |\Omega|
        }{\omega_n}\right)^{n-1}\right]^{\frac{2}{n+3}}
    &\le 
    \left[
      \left( \frac{W_{n-1}(\Omega)^n}{\omega_n^n}
      \right)^{n-1} - 
      \left( \frac{ |\Omega|
        }{\omega_n}\right)^{n-1}\right]^{\frac{2}{n+3}} \le \\ & \le
    \left[
      \left( \frac{W_{n-1}(B_{r_\Omega})+C_n \eps^{\frac{1}{2n+2}}}{\omega_n}
      \right)^{n(n-1)}-  
      \left( \frac{ |B_{r_\Omega}| 
        }{\omega_n}\right)^{n-1}\right]^{\frac{2}{n+3}} \le \\ & \le 
    \left[
      \left( \frac{W_{n-1}(B_{r_\Omega})}{\omega_n}
      \right)^{n(n-1)} + C_n \eps^{\frac{1}{2n+2}} -
      \left( \frac{ |B_{r_\Omega}| 
        }{\omega_n}\right)^{n-1}\right]^{\frac{2}{n+3}} = \\ &=
    C_n \eps^{\frac{1}{(n+1)(n+3)}}.
  \end{split}
\end{equation}
Hence, applying \eqref{gshaus}, from \eqref{catena}, \eqref{bound} and being
$W_{n-1}(\Omega)=\omega_n R=1$, we get that
\begin{equation*}
\delta_H(\Omega)\le C_n \eps^{\frac{1}{(n+1)(n+3)}}, 
\end{equation*}
while applying \eqref{gs}, we get
\begin{equation*}
D_n(\Omega) \le \omega_n(R_\Omega-r_\Omega) \le C_n
\eps^{\frac{1}{(n+1)(n+3)}}, 
\end{equation*}
and this concludes the proof.
\end{proof}
\begin{rem}
  \label{remdef}
  Under the assumption of Theorem \ref{main}, from \eqref{catena} and
  \eqref{catenella} an estimate for the deficiency of $\Omega$ holds,
  that is
  \[
  \Delta(\Omega) \le C_{n} \eps^{\frac{1}{(n+1)(n+3)}}.
  \]
\end{rem}
\section{The case of the $k$-Hessian operator, $1 \le k\le n-1$}
In this section we consider the eigenvalue problem related to the
$k$-Hessian operators, $1 \le k\le n-1$, namely
\begin{equation*}
  \left\{
    \begin{array}{ll}
      \Sk_k(D^2u)=\lambda (-u)^k &\text{in } \Omega,\\
      u=0 &\text{on } \de\Omega,
    \end{array}
  \right.
\end{equation*}
obtaining the stability result as follows.
\begin{theo}
  \label{main2}
  Let $1\le k \le n-1$, and  $\Omega \subset \R^n$ be as in
  \eqref{ipomega} such that 
  \begin{equation}
    \label{small2}
    \lambda_k(\Omega) \le (1+\eps)\lambda_k(\Omega_{k-1}^*), 
  \end{equation}
  where $\eps>0$ is sufficiently small and $\Omega_{k-1}^*$ is the
  ball   such that $W_{k-1}(\Omega)=W_{k-1}(\Omega_{k-1}^*)$.
  Moreover we suppose that the eigenfunctions related to
  $\lambda_k(\Omega)$ have convex level sets. Then,
  \[
  \delta_H(\Omega) \le C_{n,k}\eps^{\frac{2\alpha}{n+3}},
  \]
and
  \begin{equation}
    \label{tesik}
    d_k(\Omega)\le
    C_{n,k} \eps^{\alpha},\quad D_k(\Omega) \le
    C_{n,k} \eps^{\frac{2\alpha}{n+3}},
  \end{equation}
  where $\alpha=\max\left\{\frac{1}{k+1}, \frac
    {2k}{(k+1)(n+3)}\right\}$, $C_{n,k}$ is a positive constant which
  depends only on $n$ and $k$, and $d_k(\Omega)$ and $D_k(\Omega)$
  are, respectively, the interior and exterior $k$-deficiency of
  $\Omega$ as in \eqref{defe}. 
\end{theo}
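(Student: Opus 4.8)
The plan is to mimic, step by step, the structure of the proof of Theorem~\ref{main}, replacing the Monge-Amp\`ere integrand $\det D^2u$ by $\Sk_k(D^2u)$ and $W_{n-1}$ by $W_{k-1}$, and using everywhere the Aleksandrov-Fenchel machinery rather than only the isoperimetric inequality. First I would prove the analogue of Lemma~\ref{lemma1}: for the normalized eigenfunction $u$ with $\|u\|_{L^{k+1}(\Omega)}=1$ and $\Omega_\delta=\{-u>\delta\}$, computing the Rayleigh quotient of $\phi=u+\delta$ on $\Omega_\delta$ and using H\"older (with exponents $k+1$ and $(k+1)/k$) together with the equation $\Sk_k(D^2u)=\lambda_k(\Omega)(-u)^k$ gives $\lambda_k(\Omega_\delta)\le\lambda_k(\Omega)\big(\int_{\Omega_\delta}(-u-\delta)^{k+1}\big)^{-k/(k+1)}$; then Minkowski's inequality as in Lemma~\ref{lemma1}, the Faber-Krahn inequality \eqref{fksk} (valid here because the level sets are convex), and the scaling \eqref{risc_palla}, yield
\[
W_{k-1}(\Omega_\delta)\ge W_{k-1}(\Omega)\big(1-\max\{\eps,2\delta|\Omega|^{1/(k+1)}\}\big).
\]
Note $\lambda_k$ scales as $t^{-2k}$, while $W_{k-1}$ scales as $t^{n-k+1}$, so the exponent bookkeeping differs from the $k=n$ case but is routine.

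Next I would establish the analogue of the second lemma: using P\'olya-Szeg\"o \eqref{pol} for Hessian integrals and the hypothesis \eqref{small2}, together with the coarea formula and the derivation formula \eqref{reillyder} for $W_{k-1}$, one gets an integral estimate of the form
\[
\int_0^{+\infty} t^k\big(W_{k-1}(\Omega_t)^{(n-k+1)/(\dots)}-\text{(const)}\,|\Omega_t|\big)\,dt\le C_{n,k}\,\eps,
\]
where the precise power on $W_{k-1}(\Omega_t)$ is the one that makes the bracket vanish on balls, i.e. it comes from comparing $|\{-u^*_{k-1}>t\}|$ with $|\{-u>t\}|$ via the definition of the $(k-1)$-symmetrand. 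I would then need the analogue of Lemma~\ref{boundmis} — a lower bound $|\Omega|\ge\tilde C_{n,k}W_{k-1}(\Omega)^{n/(n-k+1)}$ (or the appropriate homogeneity) — obtained by integrating the equation on level sets, using \eqref{introintcurv}, H\"older, \eqref{reillyder}, and the bound $\lambda_k(\Omega)\le(1+\eps)\lambda_k(\Omega^*_{k-1})$; this makes all constants independent of $\Omega$ after the normalization $W_{k-1}(\Omega)=1$, giving $c_1(n,k)\le|\Omega|\le c_2(n,k)$ via Aleksandrov-Fenchel.

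With these three lemmas in hand, the endgame parallels the proof of Theorem~\ref{main}. From the integral estimate, a mean-value/Chebyshev argument produces a level $\tau\in[0,\delta]$ with $\delta^{k+1}=\sqrt\eps$ (hence $\delta\sim\eps^{1/(2(k+1))}$) such that $\Omega_\tau$ has small "quermassintegral deficit": $W_{k-1}(\Omega_\tau)^{(\dots)}\le(\text{const})|\Omega_\tau|+(\text{const})\sqrt\eps$. Feeding this into the quantitative Aleksandrov-Fenchel/Bonnesen-type stability estimates \eqref{gshaus}--\eqref{gs} controls $R_\tau-r_\tau$ by a power of $\eps$; combining with the inradius bound $r_\tau\le r_\Omega$, the reverse bound $r_\tau\ge\rho_\tau-C\eps^{1/(n+3)}$ where $\rho_\tau$ is the radius of $(\Omega_\tau)^*_{k-1}$, and Lemma~\ref{lemma1} applied at $\delta$ to relate $W_{k-1}(\Omega_\tau)$ to $W_{k-1}(\Omega)$, yields $d_k(\Omega)\le C_{n,k}\eps^\alpha$ with $\alpha=\max\{\tfrac{1}{k+1},\tfrac{2k}{(k+1)(n+3)}\}$ — the two terms in the max coming respectively from the $\eps^{1/(2n+2)}$-type loss in Lemma~\ref{lemma1} (via $\delta\sim\eps^{1/(2(k+1))}$, but here with the $W_{k-1}$ scaling it is $\eps^{1/(k+1)}$ after squaring considerations) and from the $\eps^{1/(n+3)}$ loss through the stability estimate, rescaled. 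The exterior deficiency $D_k(\Omega)$ and the Hausdorff asymmetry $\delta_H(\Omega)$ are then bounded by $C_{n,k}\eps^{2\alpha/(n+3)}$ exactly as in the $k=n$ case, by inserting the inradius information back into \eqref{gs}/\eqref{gshaus}.

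The main obstacle I expect is twofold. First, bookkeeping the exponents: unlike the Monge-Amp\`ere case where only the isoperimetric inequality appears, here one must use the full family \eqref{af-2} relating $W_{k-1}$ to $P$ (or to $W_k$), and the various homogeneities of $\lambda_k$, $W_{k-1}$, $W_k$ must be tracked carefully so that the final exponent is precisely $\alpha=\max\{1/(k+1),2k/((k+1)(n+3))\}$ and not something weaker; the appearance of the max signals that the optimal choice of $\delta$ (hence $\tau$) balances two competing error sources, and identifying that balance is the delicate point. Second, all the quantitative stability inputs \eqref{gshaus}--\eqref{gs} and the Faber-Krahn inequality \eqref{fksk} are being applied to the level set $\Omega_\tau$, not to $\Omega$ itself, so one must check that $\Omega_\tau$ still satisfies the qualitative hypotheses (convexity of level sets, hence convexity of $\Omega_\tau$, and sufficient regularity) — this is where the assumption that eigenfunctions have convex level sets is essential, and it should be invoked explicitly.
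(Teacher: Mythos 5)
Your overall skeleton (three lemmas, then a Chebyshev/stability endgame with normalization $W_{k-1}(\Omega)=1$) matches the paper, and your analogues of Lemma~\ref{lemma1} and Lemma~\ref{boundmis} are exactly the paper's Lemmas \ref{lemma1kkk} and \ref{boundmisk}. However, your second lemma and your Chebyshev step take a genuinely different route, and it is precisely there that a gap opens up.

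You propose to replicate the $k=n$ argument verbatim: use the P\'olya--Szeg\"o principle \eqref{pol}, compare $\int(-u^*_{k-1})^{k+1}$ with $\int(-u)^{k+1}=1$, and obtain via coarea an estimate of the form
\[
\int_0^{\infty} t^k\Big(\omega_n\big(W_{k-1}(\Omega_t)/\omega_n\big)^{n/(n-k+1)}-|\Omega_t|\Big)\,dt\le C_{n,k}\,\eps,
\]
and then apply an unweighted Chebyshev with $\delta^{k+1}=\sqrt\eps$. This is mathematically sound, but it is \emph{not} what the paper does, and it produces a weaker exponent. Your route yields, at the level $\tau$ and after passing through \eqref{gs}, losses of order $\eps^{1/(2(k+1))}$ (from $\delta\sim\eps^{1/(2(k+1))}$ in Lemma~\ref{lemma1kkk}) and $\eps^{1/(n+3)}$ (from the stability inequality). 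Neither equals $\eps^{1/(k+1)}$ nor $\eps^{2k/((k+1)(n+3))}$, and the "squaring considerations" you invoke to bridge $\eps^{1/(2(k+1))}$ to $\eps^{1/(k+1)}$ do not exist; that step is hand-waving, and as written your argument proves the statement only with the weaker exponent $\min\{1/(2(k+1)),1/(n+3)\}$.

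The paper's second lemma is structurally different: it does not pass through P\'olya--Szeg\"o at all, but writes both $\lambda_k(\Omega)$ and $\lambda_k(\Omega^*_{k-1})$ directly as integrals over the level sets, using H\"older on $\int_{\Sigma_t}H_{k-1}|Du|^k$ and Reilly's formula \eqref{reillyder}, and subtracts to obtain
\[
\frac{n(n-k+1)^k}{k}\int_0^{\max|u|}\frac{W_k(\Omega_t)^{k+1}-W_k\big((\Omega_t)^*_{k-1}\big)^{k+1}}{\big[-\tfrac{d}{dt}W_{k-1}(\Omega_t)\big]^k}\,dt\le\eps\,\lambda_k(\Omega^*_{k-1}).
\]
Crucially, the weight $\big[-\tfrac{d}{dt}W_{k-1}(\Omega_t)\big]^{-k}$ sits inside the integral. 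The endgame then uses a \emph{weighted} Chebyshev together with a separate H\"older inequality,
\[
\delta^{k+1}\le\big[W_{k-1}(\Omega)-W_{k-1}(\Omega_\delta)\big]^k\int_0^\delta\frac{dt}{\big[-\tfrac{d}{dt}W_{k-1}(\Omega_t)\big]^{k}},
\]
with the choice $\delta=\eps^{1/(k+1)}$ (not $\sqrt\eps^{1/(k+1)}$). Combined with Lemma~\ref{lemma1kkk}, this gives at some level $\tau$ a deficit of order $\eps^{k/(k+1)}$ in the Aleksandrov--Fenchel inequality \emph{between $W_k$ and $W_{k-1}$} (not between $W_{k-1}$ and $W_0$), and feeding that into the second line of \eqref{gs} yields $R_\tau-r_\tau\lesssim\eps^{2k/((k+1)(n+3))}$ and the Lemma~\ref{lemma1kkk} loss is $\eps^{1/(k+1)}$. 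These are exactly the two exponents in the theorem. Your unweighted Chebyshev loses the factor $k/(k+1)$ that the extra H\"older step recovers. So, concretely, the gap in your proposal is that it fails to obtain the stated exponent $\alpha$; to close it you would need to replace your P\'olya--Szeg\"o-based second lemma and unweighted Chebyshev by the paper's derivative-weighted integral identity and the auxiliary H\"older inequality \eqref{eq:holder}.
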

\begin{rem}
As observed in Section 2.3, the additional hypothesis on the convexity
of the level sets of the eigenfunctions corresponding to
$\lambda_k(\Omega)$ is necessary to have that a Faber-Krahn inequality
holds. On the other hand this assumption seems to be natural.  Indeed,
for $k=1$ this is due to the Korevaar concavity maximum principle
(see\cite{kor}), while it is trivial for $k=n$. For the $k$-Hessian
operators, at least in the case $n=3$ and $k=2$, it in \cite{lmx10}
and \cite{sa12} is proved that if $\Omega$ is sufficiently smooth, the
eigenfunctions of $\Sk_2$ have convex level sets. Up to our knowledge,
the general case is an open problem. 
\end{rem}
\begin{rem}
 Similarly as observed in Remark \ref{remeq}, by the estimates
 \eqref{tesik} we can obtain that 
  \begin{equation*}
    \frac{W_{k-1}(\Omega) - W_{k-1}(B_{r_\Omega})}{W_{k-1}(\Omega)} \le
    C_{n,k} \eps^{\alpha},\quad \frac{W_{k-1}(B_{R_\Omega}) -
      W_{k-1}(\Omega)}{W_{k-1}(\Omega)} \le
    C_{n,k} \eps^{\frac{2\alpha}{n+3}}.
  \end{equation*}
\end{rem}

Similarly to the case of the Monge-Amp\`ere operator, to give the
proof of Theorem \ref{main2} we first consider some preliminary
results. 

Using the same notations of section 3, for $\delta\ge 0$, we denote 
\[
\Omega_\delta = \{x\in \Omega \colon -u > \delta \}. 
\]

\begin{lemma}
  \label{lemma1kkk}
Under the hypotheses of Theorem \ref{main2}, if $u$ is the 
eigenfunction of $\Sk_k$ in $\Omega$ such that
$\|u\|_{L^{k+1}(\Omega)}=1$, then for any $\delta$ such that
$0<\delta<\frac 1 2 |\Omega|^{-\frac{1}{k+1}}$, we have 
 \begin{equation}
   \label{lemma1k}
    W_{k-1}(\Omega_\delta) \ge
 W_{k-1}(\Omega) \left[1-(n-k+1)\max\{\eps, 2\delta
   |\Omega|^{\frac{1}{k+1}}\}\right].
 \end{equation}
\end{lemma}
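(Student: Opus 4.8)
The plan is to mimic the argument of Lemma~\ref{lemma1} closely, replacing the Monge--Amp\`ere data with the $k$-Hessian data and keeping track of the exponents that change because now $W_{k-1}$ scales like a $(n-k+1)$-homogeneous quantity rather than like $W_{n-1}$. First I would compute the Rayleigh quotient of the test function $\phi=u+\delta$ on $\Omega_\delta$: since $\phi\in\Phi_k^2(\Omega_\delta)$ and $\Sk_k(D^2\phi)=\Sk_k(D^2u)$, we get $\lambda_k(\Omega_\delta)\le \big(\int_{\Omega_\delta}(-u-\delta)\Sk_k(D^2u)\,dx\big)\big(\int_{\Omega_\delta}(-u-\delta)^{k+1}\,dx\big)^{-1}$. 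Then, using that $u$ solves the eigenvalue equation with $\lambda=\lambda_k(\Omega)$, H\"older's inequality with exponents $\tfrac{k+1}{k}$ and $k+1$, and $\int_\Omega(-u)^{k+1}\,dx=1$, the numerator is bounded by $\lambda_k(\Omega)\big(\int_{\Omega_\delta}(-u-\delta)^{k+1}\,dx\big)^{1/(k+1)}$, exactly as in Lemma~\ref{lemma1} with $n$ replaced by $k$ in the exponents. This yields
\[
\lambda_k(\Omega_\delta)\le \lambda_k(\Omega)\left(\int_{\Omega_\delta}(-u-\delta)^{k+1}\,dx\right)^{-k/(k+1)}.
\]

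Next I would lower-bound $\big(\int_{\Omega_\delta}(-u-\delta)^{k+1}\,dx\big)^{1/(k+1)}$ by $1-2\delta|\Omega|^{1/(k+1)}$ using the Minkowski inequality and the restriction $\delta<\tfrac12|\Omega|^{-1/(k+1)}$, word for word as in the proof of Lemma~\ref{lemma1} (splitting $\int_{\Omega_\delta}(-u)^{k+1}=1-\int_{\Omega\setminus\Omega_\delta}(-u)^{k+1}\ge 1-\delta^{k+1}(|\Omega|-|\Omega_\delta|)$, then applying the elementary inequality $(1-s)^{1/(k+1)}\ge 1-s$ and $|\Omega_\delta|^{1/(k+1)}\le|\Omega|^{1/(k+1)}$). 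Combining with \eqref{small2}, the Faber--Krahn inequality \eqref{fksk} applied to $\Omega_\delta$ (legitimate since we assumed the eigenfunctions have convex level sets, hence the same holds for the sublevel set $\Omega_\delta$ which is itself strictly convex with convex-level-set eigenfunctions), we obtain
\[
\lambda_k\big((\Omega_\delta)^*_{k-1}\big)\le\lambda_k(\Omega_\delta)\le(1+\eps)\,\lambda_k(\Omega^*_{k-1})\,\big(1-2\delta|\Omega|^{1/(k+1)}\big)^{-k}.
\]

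Now comes the only place where the exponent $n-k+1$ enters. By the scaling \eqref{risc_palla} and \eqref{querball}, for a ball $B_\rho$ one has $\lambda_k(B_\rho)=\rho^{-2k}\lambda_k(B_1)$ and $W_{k-1}(B_\rho)=\omega_n\rho^{n-k+1}$, so $\lambda_k(\Omega^*_{k-1})/\lambda_k((\Omega_\delta)^*_{k-1})=\big(W_{k-1}(\Omega_\delta)/W_{k-1}(\Omega)\big)^{2k/(n-k+1)}$. Feeding the displayed chain into this identity gives
\[
\left(\frac{W_{k-1}(\Omega_\delta)}{W_{k-1}(\Omega)}\right)^{2k/(n-k+1)}\ge \frac{\big(1-2\delta|\Omega|^{1/(k+1)}\big)^{k}}{1+\eps},
\]
hence, taking the $(n-k+1)/(2k)$-th power and using $(1-s)^a\ge 1-as$ for $a\ge 1$ together with $1/(1+\eps)\ge 1-\eps$,
\[
\frac{W_{k-1}(\Omega_\delta)}{W_{k-1}(\Omega)}\ge\left(1-\frac{\eps+2\delta|\Omega|^{1/(k+1)}}{1+\eps}\right)^{(n-k+1)/2}\ge 1-(n-k+1)\max\{\eps,\,2\delta|\Omega|^{1/(k+1)}\},
\]
which is \eqref{lemma1k}. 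The main (and really the only) obstacle is the bookkeeping of this final exponent: one must check that $(n-k+1)/2$ is the correct power produced by inverting the $W_{k-1}$-scaling and that the factor $n-k+1$ in the conclusion absorbs the constants coming from the elementary inequalities $(1-s)^a\ge 1-as$; for $k=n$ one recovers $n-k+1=1$ and the statement of Lemma~\ref{lemma1}, which is a useful consistency check. Everything else is a verbatim transcription of the Monge--Amp\`ere argument with $n\rightsquigarrow k$ in the integrability exponents.
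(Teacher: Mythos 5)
Your argument is correct and is essentially a verbatim transcription of the paper's own proof: the same Rayleigh-quotient computation with $\phi=u+\delta$, the same H\"older and Minkowski steps, the same use of Faber--Krahn on $\Omega_\delta$, and the same scaling identity $\lambda_k(\Omega^*_{k-1})/\lambda_k((\Omega_\delta)^*_{k-1})=\bigl(W_{k-1}(\Omega_\delta)/W_{k-1}(\Omega)\bigr)^{2k/(n-k+1)}$ that produces the exponent $(n-k+1)/2$ and hence the factor $n-k+1$ in the conclusion. The only (immaterial) deviation is the elementary inequality you cite when lower-bounding the $L^{k+1}$ norm: the paper uses $(1-a^{k+1})^{1/(k+1)}\ge 1-a$, while you invoke $(1-s)^{1/(k+1)}\ge 1-s$, but either one delivers the target bound $1-2\delta|\Omega|^{1/(k+1)}$ under the stated restriction on $\delta$.
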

\begin{proof}
For $\delta>0$, we compute the Rayleigh quotient of the function
$\phi=u+\delta$ in $\Omega_\delta$. Then,
    \begin{equation}
      \label{udeltak}
          \lambda_k(\Omega_\delta) \le \dfrac{\int_{\Omega_\delta}
      (-\phi)\Sk_k (D^2 \phi)\,dx }{ \int_{\Omega_\delta}
      (-\phi)^{k+1}\,dx} = \dfrac{\int_{\Omega_\delta}
      (-u-\delta)\Sk_k (D^2 u) \,dx }{ \int_{\Omega_\delta}
     (-u-\delta)^{k+1}\,dx}. 
 \end{equation}
   \[
   \begin{split}
     \int_{\Omega_\delta}(-u-\delta)\Sk_k (D^2u) \,dx
     &= \lambda_k(\Omega) \int_{\Omega_\delta} (-u-\delta)(-u)^k\,dx \\
     &\le \lambda_k(\Omega) \left(\int_{\Omega_\delta}
       (-u-\delta)^{k+1}\,dx\right)^{\frac{1}{k+1}}
     \left(\int_{\Omega_\delta}
       (-u)^{k+1}\,dx\right)^{\frac{k}{k+1}} \\
     &\le \lambda_k(\Omega) \left(\int_{\Omega_\delta}
       (-u-\delta)^{k+1}\,dx\right)^{\frac{1}{k+1}}. 
   \end{split}
   \]
   Hence, combining the above estimate with \eqref{udeltak} it follows
   that
   \begin{equation}
     \label{udelta2k}
   \lambda_k(\Omega_\delta) \le \lambda_k(\Omega) \left(\int_{\Omega_\delta}
       (-u-\delta)^{k+1}\,dx \right)^{-\frac{k}{k+1}}    
   \end{equation}
   On the other hand, by Minkowski inequality and choosing
   $\delta<\frac 1 2 |\Omega|^{-\frac{1}{k+1}}$, we obtain
   that 
   \[
   \begin{split}
   \left(\int_{\Omega_\delta} (-u-\delta)^{k+1}\,dx\right)^{\frac{1}{k+1}}
   &\ge \left( \int_{\Omega_\delta} (-u)^{k+1} \,dx \right)^{\frac
     {1}{k+1}} - \left( \int_{\Omega_\delta} \delta^{k+1} \,dx \right)^{\frac
     {1}{k+1}} \\ &\ge \left(1- \int_{\Omega\setminus\Omega_\delta}
     \delta^{k+1} \,dx \right)^{\frac{1}{k+1}}- \delta
   |\Omega_\delta|^{\frac{1}{k+1}} \\
   &= \left(1-
     \delta^{k+1} \big(|\Omega|-|\Omega_\delta|\big)
   \right)^{\frac{1}{k+1}}- \delta |\Omega_\delta|^{\frac{1}{k+1}} \\
   &\ge 1-\delta\big( |\Omega| - |\Omega_\delta| \big)^{\frac{1}{k+1}}
   - \delta |\Omega_\delta|^{\frac{1}{k+1}} \ge 1-2\delta
   |\Omega|^{\frac {1}{k+1}}.
   \end{split}
   \]
   Hence, from \eqref{udelta2k}, \eqref{small2} and Faber-Krahn
   inequality it follows that 
   \[
  \lambda_k((\Omega_\delta)^*_{k-1}) \le \lambda_k(\Omega_\delta) \le
  (1+\eps) \lambda_k(\Omega^*_{k-1}) \big( 1-2\delta |\Omega|^{\frac
    {1}{k+1}} \big)^{-k} 
  \]
  which implies, by \eqref{risc_palla}, that
  \begin{equation}
    \label{udelta3k}
    \left(\frac{W_{k-1}(\Omega_\delta)}{W_{k-1}(\Omega)}
    \right)^{\frac{2k}{n-k+1}} 
    =
    \frac{\lambda_k(\Omega^*_{k-1})}{\lambda_k((\Omega_\delta)^*_{k-1})}  
    \ge \frac{\big( 1-2\delta |\Omega|^{\frac{1}{k+1}}
      \big)^k}{1+\eps}, 
  \end{equation}
  where we used that the balls $\Omega^*_{k-1}$ and
 $(\Omega_\delta)^*_{k-1}$ preserve, respectively, the $(k-1)$-th
 quermassintegral of $\Omega$ and $\Omega_\delta$. Hence, by
 \eqref{udelta3k} we get that
 \[
 \frac{W_{k-1}(\Omega_\delta)}{W_{k-1}(\Omega)}
 \ge \left( 1 - \frac{\eps+ 2 \delta |\Omega|^{\frac
       {1}{k+1}}}{1+\eps} \right)^{\frac{n-k+1}{2}} \ge
 1-(n-k+1)\max\{\eps, 2\delta |\Omega|^{\frac{1}{k+1}}\},
 \]
 obtaining the thesis.
\end{proof}
\begin{lemma}
  Under the hypotheses of Theorem \ref{main2}, if $u$ is the 
eigenfunction of $\Sk_k$ in $\Omega$ such that
$\|u\|_{L^{k+1}(\Omega)}=1$, we have that 
  \begin{equation}
    \label{eq:10}
    \frac {n(n-k+1)^{k}}{k}  \int_0^{\max|u|}
    \frac{W_k(\Omega_t)^{k+1}-W_k((\Omega_t)^*_{k-1})^{k+1}}{\big[-\frac
      {d}{dt} W_{k-1}(\Omega_t)\big]^k} 
    dt \le \eps \lambda_k(\Omega_{k-1}^*).
  \end{equation}
\end{lemma}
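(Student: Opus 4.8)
The plan is to run, for the Hessian integral $I_k$, the same comparison that in Section~3 produces \eqref{eq:3} for the $L^{k+1}$ norm, but keeping $\Sk_k$ all the way through instead of passing to $(-u)^{k+1}$. First I would normalise: since $\|u\|_{L^{k+1}(\Omega)}=1$ and $\Sk_k(D^2u)=\lambda_k(\Omega)(-u)^k$, the variational characterisation gives $I_k[u,\Omega]=\int_\Omega(-u)\Sk_k(D^2u)\,dx=\lambda_k(\Omega)$. Writing $(-u)(x)=\int_0^{\infty}\mathbf 1_{\{-u(x)>t\}}\,dt$ and using $\Sk_k(D^2u)\ge0$, Tonelli's theorem yields the layer-cake identity $\lambda_k(\Omega)=\int_0^{\max|u|}\big(\int_{\Omega_t}\Sk_k(D^2u)\,dx\big)\,dt$, and in the same way $I_k[u^*_{k-1},\Omega^*_{k-1}]=\int_0^{\max|u|}\big(\int_{(\Omega_t)^*_{k-1}}\Sk_k(D^2u^*_{k-1})\,dx\big)\,dt$; both $t$-integrations run over the same interval because $\|u^*_{k-1}\|_{L^\infty}=\|u\|_{L^\infty}$, and $\{-u^*_{k-1}>t\}=(\Omega_t)^*_{k-1}$ since the symmetrand preserves the $W_{k-1}$-measure of the level sets.

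Next, for almost every $t$ the level surface $\Sigma_t$ is smooth with $Du\ne0$ on it (Sard's theorem together with the $C^2\cap C^{1,1}$ regularity of $u$). On such a $\Sigma_t$ I would chain together the identity \eqref{introintcurv}, $\int_{\Omega_t}\Sk_k(D^2u)\,dx=\tfrac1k\int_{\Sigma_t}H_{k-1}|Du|^k\,d\mathcal H^{n-1}$; H\"older's inequality in the form $\int h^k\,d\mu\,(\int h^{-1}\,d\mu)^k\ge(\int d\mu)^{k+1}$ with $d\mu=H_{k-1}\,d\mathcal H^{n-1}$ and $h=|Du|$; the curvature representation $\int_{\Sigma_t}H_{k-1}\,d\mathcal H^{n-1}=nW_k(\Omega_t)$; and the derivation formula \eqref{reillyder} applied to $W_{k-1}$, that is $-\tfrac{d}{dt}W_{k-1}(\Omega_t)=\tfrac{n-k+1}{n}\int_{\Sigma_t}H_{k-1}|Du|^{-1}\,d\mathcal H^{n-1}$. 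This produces the pointwise estimate
\[
\int_{\Omega_t}\Sk_k(D^2u)\,dx\ \ge\ \frac{n(n-k+1)^k}{k}\,\frac{W_k(\Omega_t)^{k+1}}{\big[-\tfrac{d}{dt}W_{k-1}(\Omega_t)\big]^{k}},
\]
which holds with equality whenever $|Du|$ is constant on $\Sigma_t$, in particular for $u^*_{k-1}$, whose level set at height $t$ is the ball $(\Omega_t)^*_{k-1}$ and for which $-\tfrac{d}{dt}W_{k-1}$ of that level set equals $-\tfrac{d}{dt}W_{k-1}(\Omega_t)$. Subtracting the inequality for $u$ from the equality for $u^*_{k-1}$, integrating in $t$, and using $W_k(\Omega_t)\ge W_k((\Omega_t)^*_{k-1})$ — the Aleksandrov-Fenchel inequality \eqref{afineq} for the convex body $\Omega_t$ — I obtain
\[
\lambda_k(\Omega)-I_k[u^*_{k-1},\Omega^*_{k-1}]\ \ge\ \frac{n(n-k+1)^k}{k}\int_0^{\max|u|}\frac{W_k(\Omega_t)^{k+1}-W_k((\Omega_t)^*_{k-1})^{k+1}}{\big[-\tfrac{d}{dt}W_{k-1}(\Omega_t)\big]^{k}}\,dt\ \ge\ 0.
\]

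To close the estimate I use the hypothesis that the eigenfunction has convex level sets: then $\|u^*_{k-1}\|_{L^{k+1}(\Omega^*_{k-1})}\ge\|u\|_{L^{k+1}(\Omega)}=1$, so using $u^*_{k-1}$ as a competitor in the (radial) variational characterisation of $\lambda_k(\Omega^*_{k-1})$ gives $I_k[u^*_{k-1},\Omega^*_{k-1}]\ge\lambda_k(\Omega^*_{k-1})\int_{\Omega^*_{k-1}}(-u^*_{k-1})^{k+1}\,dx\ge\lambda_k(\Omega^*_{k-1})$. Combining this with the previous display and the smallness assumption \eqref{small2},
\[
\frac{n(n-k+1)^k}{k}\int_0^{\max|u|}\frac{W_k(\Omega_t)^{k+1}-W_k((\Omega_t)^*_{k-1})^{k+1}}{\big[-\tfrac{d}{dt}W_{k-1}(\Omega_t)\big]^{k}}\,dt\ \le\ \lambda_k(\Omega)-\lambda_k(\Omega^*_{k-1})\ \le\ \eps\,\lambda_k(\Omega^*_{k-1}),
\]
which is exactly \eqref{eq:10}.

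The step I expect to be the main obstacle is the pointwise refinement of the P\'olya-Szeg\"o principle \eqref{pol}: one has to justify the level-set identities for almost every $t$, the equality case for the merely Lipschitz symmetrand $u^*_{k-1}$, and the admissibility of $u^*_{k-1}$ in the variational problem on $\Omega^*_{k-1}$ — for all of these I would lean on the radial representation of $I_k$ recalled in Section~2.2 and on the coarea/Sard arguments underlying the proof of \eqref{pol} in \cite{tso,trudiso}. Matching the precise constant $n(n-k+1)^k/k$ is then a matter of carefully tracking the normalisations in \eqref{introintcurv}, \eqref{reillyder} and the curvature formula for $W_k$.
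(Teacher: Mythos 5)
Your proposal is correct and matches the paper's own argument essentially line by line: the Hölder inequality on $\Sigma_t$ with $d\mu=H_{k-1}\,d\mathcal H^{n-1}$, the Reilly derivation formula \eqref{reillyder} and the curvature formula $\int_{\Sigma_t}H_{k-1}=nW_k(\Omega_t)$ produce exactly the paper's \eqref{lemmak}, and the competitor bound $\lambda_k(\Omega^*_{k-1})\le I_k[u^*_{k-1},\Omega^*_{k-1}]$ via $\|u^*_{k-1}\|_{k+1}\ge 1$ is the paper's \eqref{lemmakball}; subtracting and invoking \eqref{small2} closes the proof in both cases. The only cosmetic difference is that you phrase the first step as a pointwise-in-$t$ estimate with an explicit equality case for the radial symmetrand, whereas the paper writes both sides at once using the radial representation of the Hessian integral — the content is the same.
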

\begin{proof}
The divergence form of $\Sk_k$ and the coarea formula give that
  (see also \cite{trudiso})  
  \[
  \lambda_k(\Omega) = \frac 1 k \int_0^{\max(-u)} dt \int_{\Sigma_t}
  H_{k-1}(\Sigma_t) |Du|^k d\mathcal H^{n-1},
  \]
  where $\Sigma_t=\partial \Omega_t$. Then, the H\"older inequality
  and the Reilly formula \eqref{reillyder} give that
  \begin{multline}
    \label{lemmak}
     \lambda_k(\Omega) \ge \frac 1 k \int_0^{\max(-u)}
  \dfrac{ \left( \int_{\Sigma_t} H_{k-1}(\Sigma_t) d\mathcal
      H^{n-1}\right)^{k+1}}
  {\left(
      \int_{\Sigma_t} \frac{H_{k-1}(\Sigma_t)}{|Du|}d\mathcal H^{n-1}
    \right)^{k} } dt = \\ =
  \frac {n(n-k+1)^{k}}{k} \int_0^{\max(-u)}
   \frac{ W_k(\Omega_t)^{k+1} }
  {\big[-\frac{d}{dt} W_{k-1}(\Omega_t)\big]^{k} } dt.
\end{multline}
Moreover, being $\|u^*_{k-1}\|_{k+1} \ge \|u\|_{k+1}=1$, we have
\begin{multline}
  \label{lemmakball}
\lambda_k(\Omega^*_{k-1}) \le
\frac{ \int_{\Omega^*_{k-1}} (-u^*_{k-1})\Sk_k(D^2 u^*_{k-1}) dx }
{\int_{\Omega^*_{k-1}} (-u^*_{k-1})^{k+1} dx } \le
\int_{\Omega^*_{k-1}} (-u^*_{k-1})\Sk_k(D^2 u^*_{k-1}) dx = \\ =
 \frac {n(n-k+1)^{k}}{k} \int_0^{\max(-u)}
   \frac{ W_k((\Omega_t)^*_{k-1})^{k+1}}
  {\big[-\frac{d}{dt} W_{k-1}(\Omega_t)\big]^{k} } dt.
\end{multline}
Last equality follows from the symmetry of $u^*_{k-1}$ and being
$W_{k-1}(\Omega_t)= W_{k-1}\big((\Omega_t)^*_{k-1}\big)$. Hence, taking
\eqref{lemmak} and \eqref{lemmakball} and subtracting, from we have that
\begin{multline*}
\eps \lambda_k(\Omega^*_{k-1}) \ge \\ \ge \lambda_k(\Omega)
-\lambda_k(\Omega^*_{k-1}) \ge
\frac {n(n-k+1)^{k}}{k} \int_0^{\max(-u)}
   \frac{  W_k(\Omega_t)^{k+1}- W_k((\Omega_t)^*_{k-1})^{k+1}}
  {\big[-\frac{d}{dt} W_{k-1}(\Omega_t)\big]^{k} } dt, 
\end{multline*}
that gives the thesis.
\end{proof}
In the next result we prove a lower bound for $|\Omega|$ in term of $W_{k-1}(\Omega)$. 
\begin{lemma}
\label{boundmisk}
Under the hypotheses of Theorem \ref{main2}, it holds that 
\begin{equation*}
|\Omega| \ge C_{n,k} W_{k-1}^{\frac{n}{n-k+1}}(\Omega),
\end{equation*}
where $C_{n,k}$ denotes a positive constant depending only on $n$ and $k$.
\end{lemma}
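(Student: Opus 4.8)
The plan is to follow the scheme of Lemma~\ref{boundmis}. From \eqref{lemmak} --- obtained by integrating the eigenvalue equation $\Sk_k(D^2u)=\lambda_k(\Omega)(-u)^k$ over the level sets $\Omega_t=\{-u>t\}$ of a normalized eigenfunction $u$ (so $\|u\|_{L^{k+1}(\Omega)}=1$) and combining \eqref{introintcurv} with the H\"older inequality and the Reilly formula \eqref{reillyder} --- one reads off the integrated bound
\[
\int_0^{\max(-u)}\frac{W_k(\Omega_t)^{k+1}}{\bigl[-\tfrac{d}{dt}W_{k-1}(\Omega_t)\bigr]^{k}}\,dt\ \le\ \frac{k}{n(n-k+1)^{k}}\,\lambda_k(\Omega).
\]
Unlike the Monge--Amp\`ere case $k=n$, here $W_k(\Omega_t)$ is not constant and vanishes as $t\to\max(-u)$, so one cannot simply integrate a differential inequality up to the top level; instead the estimate has to be localized on a short initial interval.

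Accordingly I would fix $t_0=c\,|\Omega|^{-1/(k+1)}$ with a dimensional constant $c=c(n,k)\in(0,\tfrac12)$, to be chosen small. Since $1=\|u\|_{L^{k+1}}^{k+1}\le|\Omega|\,\|u\|_{L^\infty}^{k+1}$, one has $\max(-u)=\|u\|_{L^\infty}\ge|\Omega|^{-1/(k+1)}>t_0$, so $(0,t_0)\subset(0,\max(-u))$ and Lemma~\ref{lemma1kkk} applies with $\delta=t_0$, giving $W_{k-1}(\Omega_{t_0})\ge W_{k-1}(\Omega)\bigl[1-(n-k+1)\max\{\eps,2c\}\bigr]\ge\tfrac12\,W_{k-1}(\Omega)$ once $c$ and $\eps$ are small enough. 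Since the level sets of $u$ are convex, $t\mapsto W_k(\Omega_t)$ is nonincreasing, hence the Aleksandrov--Fenchel inequality \eqref{afineq} in the form $W_k\ge\omega_n^{1/(n-k+1)}W_{k-1}^{(n-k)/(n-k+1)}$ gives
\[
W_k(\Omega_t)\ \ge\ W_k(\Omega_{t_0})\ \ge\ \omega_n^{\frac1{n-k+1}}W_{k-1}(\Omega_{t_0})^{\frac{n-k}{n-k+1}}\ \ge\ c_1(n,k)\,W_{k-1}(\Omega)^{\frac{n-k}{n-k+1}}\qquad\text{for every }t\in[0,t_0].
\]

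Next I would restrict the integrated bound to $(0,t_0)$, use the lower bound just displayed for $W_k(\Omega_t)$, and apply H\"older's inequality with exponents $\tfrac{k+1}{k}$ and $k+1$ to $\int_0^{t_0}1\,dt$, namely
\[
t_0^{\,k+1}\ \le\ \Bigl(\int_0^{t_0}\bigl[-\tfrac{d}{dt}W_{k-1}(\Omega_t)\bigr]\,dt\Bigr)^{\!k}\int_0^{t_0}\bigl[-\tfrac{d}{dt}W_{k-1}(\Omega_t)\bigr]^{-k}\,dt\ \le\ W_{k-1}(\Omega)^{k}\int_0^{t_0}\bigl[-\tfrac{d}{dt}W_{k-1}(\Omega_t)\bigr]^{-k}\,dt,
\]
where I used $\int_0^{t_0}\bigl[-\tfrac{d}{dt}W_{k-1}(\Omega_t)\bigr]dt=W_{k-1}(\Omega)-W_{k-1}(\Omega_{t_0})\le W_{k-1}(\Omega)$. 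Substituting the bound for the last integral coming from the integrated estimate of the first paragraph, together with $t_0^{\,k+1}=c^{\,k+1}|\Omega|^{-1}$ and the identity $k-\tfrac{(k+1)(n-k)}{n-k+1}=\tfrac{2k-n}{n-k+1}$, this rearranges to $|\Omega|\ge\frac{C(n,k)}{\lambda_k(\Omega)}\,W_{k-1}(\Omega)^{(n-2k)/(n-k+1)}$. Finally I would insert the hypothesis \eqref{small2} and the scaling identities \eqref{risc_palla}, \eqref{querball}, which give $\lambda_k(\Omega^*_{k-1})=\bigl(W_{k-1}(\Omega)/\omega_n\bigr)^{-2k/(n-k+1)}\lambda_k(B)$ with $B$ the unit ball; absorbing $1+\eps\le2$ into the constant, the powers of $W_{k-1}(\Omega)$ add up to $\tfrac{2k}{n-k+1}+\tfrac{n-2k}{n-k+1}=\tfrac{n}{n-k+1}$, which is precisely $|\Omega|\ge C_{n,k}\,W_{k-1}(\Omega)^{n/(n-k+1)}$.

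The one genuinely delicate point is the choice of the localization scale $t_0\sim|\Omega|^{-1/(k+1)}$: it is the largest scale on which Lemma~\ref{lemma1kkk} still forces $W_{k-1}(\Omega_{t_0})$ to remain a fixed fraction of $W_{k-1}(\Omega)$, hence the scale on which $W_k(\Omega_t)$ can be bounded below by a controlled quantity. This is what makes the constant independent of $\Omega$ and keeps the argument uniform for all $1\le k\le n-1$, in particular in the borderline range $n\le2k$ (for instance $n=2$, $k=1$), where integrating the differential inequality all the way up to $\max(-u)$, as in the case $k=n$, does not close. All the remaining steps are routine applications of H\"older's inequality, the Aleksandrov--Fenchel inequalities and the scaling \eqref{risc_palla}.
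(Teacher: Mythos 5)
Your proof is correct, and it takes a genuinely different route from the one in the paper. The paper starts from the pointwise estimate \eqref{sinistrak}, combines it with either the $L^\infty$ or the $L^{k+1}$ bound on $(-u)^k$ to obtain a differential inequality of the form $W_{k-1}(\Omega_t)^{-\beta}\bigl(-\tfrac{d}{dt}W_{k-1}(\Omega_t)\bigr)\ge C$ with $\beta=\frac{(k+1)(n-k)}{k(n-k+1)}$, and then integrates it directly. Since the sign of $1-\beta=\frac{2k-n}{k(n-k+1)}$ changes with the relation between $k$ and $n/2$, the paper has to split the argument into three cases: for $k>\frac n2$ it integrates all the way up to $\max(-u)$ (so Lemma~\ref{lemma1kkk} is not needed there), while for $k<\frac n2$ and the logarithmic borderline $k=\frac n2$ it restricts to a short initial interval and invokes Lemma~\ref{lemma1kkk} to control $W_{k-1}(\Omega_\delta)$. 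You instead work with the integrated inequality \eqref{lemmak}, localize once and for all on $(0,t_0)$ with $t_0\sim|\Omega|^{-1/(k+1)}$ chosen so that Lemma~\ref{lemma1kkk} keeps $W_{k-1}(\Omega_{t_0})\ge\frac12 W_{k-1}(\Omega)$, use monotonicity of quermassintegrals under inclusion together with Aleksandrov--Fenchel to get a uniform lower bound on $W_k(\Omega_t)$ on $[0,t_0]$, and then close by a one-line H\"older estimate in $t$. This yields a single unified argument for all $1\le k\le n-1$, at the modest price of invoking monotonicity of $W_k$ (which is standard for convex bodies and is available here by the convex level-set hypothesis of Theorem~\ref{main2}) and the H\"older step, whereas the paper's route stays with elementary ODE integration at the cost of the three-way case split. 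The final scaling step (using \eqref{small2}, \eqref{risc_palla}, \eqref{querball} to turn $|\Omega|\ge C\lambda_k(\Omega)^{-1}W_{k-1}(\Omega)^{(n-2k)/(n-k+1)}$ into the claimed $|\Omega|\ge C_{n,k}W_{k-1}(\Omega)^{n/(n-k+1)}$) is identical in both proofs.
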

\begin{proof}
Let be $u$ an eigenfunction corresponding to the eigenvalue
$\lambda=\lambda_k(\Omega)$ and such that $\|u\|_{L^{k+1}}=1$. Then,
\begin{equation}
\label{eqk}
\Sk_k (D^2u) = \lambda (-u)^k \quad\text{ in }\Omega.
\end{equation} Arguing as in Lemma \ref{boundmis}, by
\eqref{introintcurv} 
\eqref{reillyder} and the H\"older inequality we have
\begin{equation}
\label{sinistrak}
\int_{\Omega_t}\Sk_k (D^2u)dx = \frac{1}{k} \int_{\Sigma_t} H_{k-1} |Du|^k
d\mathcal H^{n-1} \ge C_{n,k} \frac{\left(W_k(\Omega_t)\right)^{k+1}}
{\left(-\frac{d}{dt}W_{k-1}(\Omega_t)\right)^k}.
\end{equation}
We divide the proof in three cases.

Case $k> \frac n 2$. By H\"older inequality we have:
\begin{equation}
\label{destrak}
\left(\int_{\Omega_t}(-u)^kdx\right)^{\frac 1 k} \le
|\Omega|^{\frac 1 k} \|u\|_{L^{\infty}(\Omega)}.
\end{equation}
Putting togheter \eqref{sinistrak} and \eqref{destrak}, by \eqref{eqk}
we get that
\[
W_k(\Omega_t)^{-\frac{k+1}{k}} \left(
  -\frac{d}{dt}W_{k-1}(\Omega_t) \right) \ge C_{n,k} |\Omega|^{-\frac
  1 k} 
\|u\|_{\infty}^{-1}\lambda^{-\frac 1 k}.
\]
Using the Aleksandrov-Fenchel inequalities \eqref{afineq} with $j=k$
and $i=k-1$, and integrating between
$0$ and $\|u\|_{L^{\infty}(\Omega)}$, being $k>\frac n 2$ we get  
\[
W_{k-1}(\Omega)^{\frac{2k-n}{k(n-k+1)}}\ge C_{n,k}
\lambda^{-\frac{1}{k}}|\Omega|^{-\frac{1}{k}}. 
\]
Being $\lambda_k(\Omega) \le (1+\eps)\lambda_k(\Omega^*_{k-1})$,
and recalling the properties \eqref{risc_palla} and
\eqref{querball}, we have: 
\begin{gather*}
\begin{split}
  |\Omega|^{\frac{1}{k}} &
  \ge C_{n,k} W_{k-1}(\Omega)^{-\frac{2k-n}{k(n-k+1)}}
{\lambda_{k}(\Omega^*_{k-1})^{-\frac{1}{k}}} {(1+\eps)^{-\frac{1}{k}}}
=
\\ &=
C_{n,k}   W_{k-1}(\Omega)^{-\frac{2k-n}{k(n-k+1)}}
W_{k-1}(\Omega^*_{k-1})^{\frac {2}{n-k+1}}
{\lambda_{k}(B_1)^{-\frac{1}{k}}} 
{(1+\eps)^{-\frac{1}{k}}}
= \\ &= C_{n,k}
{(1+\eps)^{-\frac{1}{k}}}W_{k-1}(\Omega)^{\frac{n}{k(n-k+1)}}, 
\end{split}
\end{gather*}
and the first case is completed.

Case $k< \frac n 2$. By H\"older inequality, and being $\|u\|_{k+1}=1$
we have:
\begin{equation}
\label{destrak2}
\left(\int_{\Omega_t}(-u)^kdx\right)^{\frac 1 k} \le
|\Omega_t|^{\frac {1}{k(k+1)}} \left( \int_{\Omega_t} u^{k+1}dx
  \right)^{\frac {1}{k+1}}\le |\Omega|^{\frac {1}{k(k+1)}}.
\end{equation}
Then, joining \eqref{sinistrak} and \eqref{destrak2}, and using the
Aleksandrov-Fenchel inequalities we get
\[
W_{k-1}(\Omega_t)^{-\frac{(k+1)(n-k)}{k(n-k+1)}} \left(
  -\frac{d}{dt}W_{k-1}(\Omega_t) \right) \ge C_{n,k} |\Omega|^{-\frac
  {1} {k(k+1)}}\lambda^{-\frac 1 k}.
\]
Integrating between $0$ and $\delta$ sufficiently small, we get that
\[
W_{k-1}(\Omega_\delta)^{-\frac{n-2k}{k(n-k+1)}}-
W_{k-1}(\Omega)^{-\frac{n-2k}{k(n-k+1)}} \ge C_{n,k} |\Omega|^{-\frac
  {1} {k(k+1)}}\lambda^{-\frac 1 k} \delta. 
\]
Now we apply Lemma \ref{lemma1kkk}. Let $\eps$ and $\delta$ such that 
$\eps<2\delta|\Omega|^{\frac{1}{k+1}}<(n-k+1)^{-1}$. Hence, writing
$\alpha=-\frac{n-2k}{k(n-k+1)}<0$, we get
\begin{equation}
  \label{boh1}
  W_{k-1}(\Omega)^{\alpha}\left[
    (1-2\delta (n-k+1) |\Omega|^{\frac{1}{k+1}}
    )^\alpha-1 \right]
  \ge C_{n,k} |\Omega|^{-\frac{1} {k(k+1)}}\lambda^{-\frac 1
    k}\delta.
\end{equation}
Moreover, if $\delta$ is such that $2\delta
|\Omega|^{\frac{1}{k+1}}(n-k+1)\le 1-{2^{-\frac{1}{1-\alpha}}}$, from
\eqref{boh1} we get
\[
  W_{k-1}(\Omega)^{\alpha}\left[
    -4\alpha  (n-k+1) |\Omega|^{\frac{1}{k+1}}
     \right] \delta
  \ge C_{n,k} |\Omega|^{-\frac{1} {k(k+1)}}\lambda^{-\frac 1
    k}\delta,
  \]
  that is
  \[
  |\Omega|^{\frac{1}{k}}
  \ge C_{n,k} W_{k-1}(\Omega)^{\frac{n-2k}{k(n-k+1)}} \lambda^{-\frac
    1 k}= C_{n,k} (1+\eps)^{-\frac{1}{k}} \lambda_k(B_1)^{-\frac 1 k}
  W_{k-1}(\Omega)^{\frac{n}{k(n-k+1)}}, 
  \]
  that is the thesis.

  Case $k= \frac n 2$.
  Arguing as before, we get
  \[
 \log\left(\frac{W_{\frac n 2-1}(\Omega)}
   {W_{\frac n 2-1}(\Omega_\delta)}\right) \ge C_{n} |\Omega|^{-\frac
  {4} {n (n+2)}}\lambda^{-\frac 2 n} \delta.
\]
By Lemma \ref{lemma1kkk}, it follows that if
$\eps<2\delta|\Omega|^{\frac{2}{n+2}}<\frac{2}{n+2}$,
\[
-\log\left( 1-\delta\left(n +2\right)|\Omega|^{\frac{2}{n+2}} \right)
\ge C_{n} |\Omega|^{-\frac
  {4} {n (n+2)}}\lambda^{-\frac 2 n} \delta.
\]
Then, for $\delta$ such that $\delta (n+2)
|\Omega|^{\frac{2}{n+2}}<\frac {1} {2(n+2)}$,
\[
2(n+2)|\Omega|^{\frac{2}{n+2}}\delta \ge
C_{n}|\Omega|^{-\frac{4}{n(n+2)}}\lambda^{-\frac 2 n} \delta.
\]
Then, similarly as before,
\[
|\Omega|^{\frac{2}{n}} \ge C_{n}
W_{\frac n 2 -1}(\Omega)^{\frac{4}{(n+2)}}, 
\]
and the proof of the Lemma is completed.
\end{proof}

Now we can prove the main theorem of this section.
\begin{proof}[Proof of the Theorem \ref{main2}]
  Without loss of generality, we may suppose that $W_{k-1}(\Omega)=1$.
  Indeed, the quotient
  \[
  \frac{W_{k-1}(K)-W_{k-1}(L)}{W_{k-1}(K)} 
  \]
  is rescaling invariant. Consequently, by Lemma \ref{boundmisk} and the
  Aleksandrov-Fenchel inequality, we have that there exist two
  positive constants $c_1(n,k)$ and $c_2(n,k)$, which depend only on $n$
  and $k$, such that
  \begin{equation}
    \label{boundk}
   c_1(n,k) \le |\Omega| \le c_2(n,k).
  \end{equation}
  The H\"older inequality gives that
  \begin{multline}
    \label{eq:holder}
   \eps= \delta^{k+1} = \left( \int_0^\delta dt \right)^{k+1} \le
   \left(\int_0^\delta \big[-\frac{d}{dt} W_{k-1}(\Omega_t)\big] dt\right)^k
   \int_0^\delta \frac{1}{\big[-\frac{d}{dt}
     W_{k-1}(\Omega_t)\big]^k}dt = \\ = 
   \left[W_{k-1}(\Omega)-W_{k-1}(\Omega_\delta)\right]^k
    \int_0^\delta \frac{1}{\big[-\frac{d}{dt} W_{k-1}(\Omega_t)\big]^{k}}dt.
  \end{multline}
  Hence, for $\eps>0$ sufficiently small, $\delta$ verifies the
  hypothesis in Lemma \ref{lemma1kkk}, and the inequalities
  \eqref{eq:holder}, \eqref{lemma1k}, \eqref{eq:10} and \eqref{boundk}
  imply that  
  \begin{multline*}
    \inf_{t\in [0,\delta]} \left( W_k(\Omega_t)^{k+1}-
      W_k((\Omega_t)^*_{k-1})^{k+1}\right) \le \\
    \le \frac 1 \eps
    \left[W_{k-1}(\Omega)-W_{k-1}(\Omega_\delta)\right]^k 
    \int_0^{\max(-u)}
   \frac{  W_k(\Omega_t)^{k+1}- W_k((\Omega_t)^*_{k-1})^{k+1}}
  {\big[-\frac{d}{dt} W_{k-1}(\Omega_t)\big]^{k} } dt \le \\ \le
  C_{n,k} \eps^{\frac {k}{k+1}}
  \lambda_k(\Omega^*_{k-1}).
\end{multline*}
Hence, for some $\tau \in [0,\delta]$, we have that
\[
W_k(\Omega_\tau)^{k+1}
\le W_k((\Omega_\tau)^*_{k-1})^{k+1} + C_{n,k} \eps^{\frac{k}{k+1}},
\]
being $W_{k-1}(\Omega)=1$.
Moreover, an algebraic inequality and \eqref{boundk} give that
\[
{\omega_n}^{-1} W_{k}(\Omega_\tau)^{n-k+1}
\le W_{k-1}(\Omega_\tau)^{n-k} + C_{n,k} \eps^{\frac{k}{k+1}},
\]

Applying the estimates \eqref{gs} and \eqref{boundk}, and using the
same notation of the proof of Theorem \ref{main}, we have
\begin{equation*}
  (R_\tau-r_\tau)^{(n+3)/2} \le 
 C_{n,k}\eps^{\frac{k}{k+1}}.
\end{equation*}
Moreover, by \eqref{lemma1k} it follows that
\begin{multline}
  \label{katenella}
  r_\tau \ge \rho_\tau - C_{n,k} \eps^{\frac{2k}{(k+1)(n+3)}} =
  \left(\frac{W_{k-1}(\Omega_\tau)}{\omega_n}\right)^{\frac{1}{n-k+1}} -
  C_{n,k} 
  \eps^{\frac{2k}{(k+1)(n+3)}} \ge \\
  \ge
 \left[ \frac{ W_{k-1}(\Omega) }{\omega_n}
  \left(1-C_{n,k}\eps^{\frac{1}{k+1}}|\Omega|^{\frac{1}{k+1}}
  \right) \right]^{\frac{1}{n-k+1}} - C_{n,k} 
\eps^{\frac{2k}{(k+1)(n+3)}} \ge  \\
\ge
\left[\frac{ W_{k-1}(\Omega) }{\omega_n}\right]^{\frac{1}{n-k+1}}
\left(
1-\tilde C_{n,k}\eps^{\frac{1}{k+1}}
-C_{n,k} \eps^{\frac{2k}{(k+1)(n+3)}}
\right)\ge
R
\left(
  1- C_{n,k}\eps^\alpha
\right),
\end{multline}
where $R=\left[
  \omega_n^{-1}W_{k-1}(\Omega)\right]^{\frac{1}{n-k+1}}$ is the
radius of $\Omega^*_{k-1}$, and $\alpha=\max\left\{\frac{1}{k+1}, \frac
  {2k}{(k+1)(n+3)}\right\}$. Hence, recalling \eqref{querball}, and
being $r_\tau\le r_\Omega$ and $W_{k-1}(\Omega)=1$, then
\begin{equation*}
  \label{kkkk}
d_k(\Omega) \le \omega_n^{\frac{1}{n-k+1}}(R-r_\tau)\le
C_{n,k} \eps^{\alpha},
\end{equation*}
that is the first estimate in \eqref{tesik}.
In order to obtain the remaining estimates of the theorem,
using the Aleksandrov-Fenchel inequalities, \eqref{katenella} and
being $W_{k-1}(\Omega)=1$, we have that 
\begin{equation}\label{catenak} 
  \begin{split}   
   \left[\left(
        \frac{P(\Omega)}{n\omega_n} \right)^n - \left( \frac{ |\Omega|
        }{\omega_n}\right)^{n-1}\right]^{\frac{2}{n+3}}
    &\le \left[
      \left( \frac{W_{k-1}(\Omega)^n}{\omega_n^n}
      \right)^{\frac{n-1}{n-k+1}} - 
      \left( \frac{ |\Omega|
        }{\omega_n}\right)^{n-1}\right]^{\frac{2}{n+3}} \le
    \\
    & \le
    \left[
      \left( \frac{W_{k-1}(B_{r_\Omega})+C_{n,k}
          \eps^{\alpha}}{\omega_n}
      \right)^{\frac{n(n-1)}{n-k+1}} - 
      \left( \frac{ |B_{r_\Omega}| 
        }{\omega_n}\right)^{n-1}\right]^{\frac{2}{n+3}} \le
    \\ & \le 
    \left[
      \left( \frac{W_{k-1}(B_{r_\Omega})}{\omega_n}
      \right)^{\frac{n(n-1)}{n-k+1}} + C_{n,k} \eps^{\alpha} - 
      \left( \frac{ |B_{r_\Omega}| 
        }{\omega_n}\right)^{n-1}\right]^{\frac{2}{n+3}} =
    \\ & = 
    C_{n,k} \eps^{\frac{2\alpha}{n+3}}.
  \end{split}
\end{equation}
Hence, \eqref{catenak} and \eqref{gshaus} imply
\[
\delta_H(\Omega)\le C_{n,k}
\eps^{\frac{2\alpha}{n+3}},
\]
while, from \eqref{gs} we get
\begin{equation*}
D_k(\Omega) \le
\omega_n^{\frac{1}{n-k+1}}(R_\Omega-r_\Omega) \le C_{n,k}
\eps^{\frac{2\alpha}{n+3}},
\end{equation*}
and this concludes the proof.
\end{proof}
\begin{rem}
  Similarly as observed in Remark \ref{remdef}, from the proof of
  Theorem \ref{main2} it is possible to obtain that
  \[
  \Delta(\Omega) \le C_{n,k} \eps^{\frac{2\alpha}{n+3}}.
  \]
\end{rem}
\bibliography{/Users/francescodellapietra/Documents/Biblioteca/library}{}
\bibliographystyle{abbrv}

\end{document}